\documentclass[12pt]{amsart}

\setlength{\textheight}{23cm}
\setlength{\textwidth}{16cm}
\setlength{\topmargin}{-0.8cm}
\setlength{\parskip}{0.3\baselineskip}
\hoffset=-1.4cm

\newtheorem{theorem}{Theorem}[section]
\newtheorem{lemma}[theorem]{Lemma}
\newtheorem{corollary}[theorem]{Corollary}
\newtheorem{proposition}[theorem]{Proposition}
\newtheorem{remark}[theorem]{Remark}
\newtheorem{definition}[theorem]{Definition}

\newcommand{\ncom}{\newcommand}
\ncom{\lrar}{\longrightarrow}
\ncom{\ov}{\overline}
\ncom{\m}{\mbox}
\ncom{\vepsilon}{\varepsilon}
\ncom{\sta}{\stackrel}
\ncom{\comx}{{\mathbb C}}
\ncom{\Z}{{\mathbb Z}}
\ncom{\Q}{{\mathbb Q}}
\ncom{\R}{{\mathbb R}}
\ncom{\G}{{\mathbb G}}
\ncom{\al}{\alpha}
\ncom{\p}{{\mathbb P}}
\ncom{\E}{{\mathbb E}}
\ncom{\N}{{\mathbb N}}
\ncom{\K}{{\mathbb K}}
\ncom{\f}{\frac}
\ncom{\cA}{{\mathcal A}}
\ncom{\cD}{{\mathcal D}}
\ncom{\cX}{{\mathcal X}}
\ncom{\cO}{{\mathcal O}}
\ncom{\cW}{{\mathcal W}}
\ncom{\cL}{{\mathcal L}}
\ncom{\cP}{{\mathcal P}}
\ncom{\cH}{{\mathcal H}}
\ncom{\cS}{{\mathcal S}}
\ncom{\cM}{{\mathcal M}}
\ncom{\cC}{{\mathcal C}}
\ncom{\cT}{{\mathcal T}}
\ncom{\cF}{{\mathcal F}}
\ncom{\cN}{{\mathcal N}}
\ncom{\cJ}{{\mathcal J}}
\ncom{\cV}{{\mathcal V}}
\ncom{\cZ}{{\mathcal Z}}
\ncom{\cU}{{\mathcal U}}
\ncom{\cSU}{{\mathcal S \mathcal U}}
\ncom{\cG}{{\mathcal G}}
\ncom{\cQ}{{\mathcal Q}}
\ncom{\cR}{{\mathcal R}}
\ncom{\cI}{{\mathcal I}}

\begin{document}
\baselineskip=16pt

\title{Bundles of Verlinde spaces and group actions}

\author[J. N. Iyer]{Jaya NN Iyer}

\address{The Institute of Mathematical Sciences, CIT
Campus, Taramani, Chennai 600113, India}

\email{jniyer@imsc.res.in}
\footnotetext{Mathematics Classification Number: 14C25, 14D05, 14D20, 14D21 }
\footnotetext{Keywords: Connections, moduli spaces, generalised theta functions, theta group.}
\begin{abstract}
A \textit{Verlinde space} of level $k$ is the space of global sections of the $k$-th power of 
the determinant line bundle on the moduli space $\cSU_C(r)$ of semi-stable bundles of rank $r$ on a
curve $C$.
The aim of this note is to make accessible some remarks on the action of the \textit{Theta group} on the Verlinde spaces of higher level. This gives a decomposition of the bundle of Verlinde spaces
over the moduli space of curves and we indicate how to compute the rank of the isotypical components in the decomposition. 

\end{abstract}

\maketitle

$$\textsc{Contents}$$

1. Introduction

2. The space $H^0(\cSU_C(r),\Theta_C)$ is a Heisenberg module 

3. Parabolic case

4. A decomposition of the Verlinde bundles of higher level

5. A remark on the multiplicities of the isotypical components

6. References

\section{Introduction}

Let $C$ be a nonsingular connected projective curve defined over $\comx$.
The Jacobian variety $J(C)$ associated to the curve is a moduli space of rank
one and degree zero bundles on the curve $C$. There is a natural polarization $\Theta_C$
on the Jacobian and one can associate the space $H^0(J(C), \Theta^k_C)$ of global sections of the $k$-th power of the line bundle $\Theta_C$, also called as the abelian theta functions. 
The \textit{Theta group} $\cG(\Theta_C^k)$ was introduced by Mumford \cite{Mu1} and he prescribed an action of this group on $H^0(J(C),\Theta^k_C)$ (more generally for sections of line bundles on abelian varieties, see \S 2.1). As an application, he obtained results
on equations defining abelian varieties amongst many other moduli questions.

A higher rank analogue of $J(C)$ is the moduli space $\cU_C(r,0)$ of semi--stable bundles of rank $r$ and degree $0$ and the moduli space $\cSU_C(r)$ of semi--stable vector bundles of rank $r$ and trivial determinant on $C$, introduced by Mumford, Narasimhan and Seshadri \cite{Mu}, \cite{Na-Se}, \cite{Se}. There is a polarization $\Theta$ on the moduli space $\cSU_C(r)$ called as the 
determinant bundle \cite{Dr-Na}.
The space $H^0(\cSU_C(r),\Theta^k)$ of global sections of $\Theta^k$ are called as the \textit{Verlinde spaces} of level $k$. The sections are  called  the generalized theta functions. An action of a theta group $\cG$ on the space 
$H^0(\cSU_C,\Theta)$ was prescribed in \cite{BNR} and it was shown to be an irreducible $\cG$--module. We wish to investigate the $\cG$--action on the higher 
level Verlinde spaces. 

We put this in the framework of families of these moduli spaces over the moduli space of curves. This is done to be able to compute the Chern classes of the bundle of the Verlinde spaces of level one and we hope that it finds applications on further questions on monodromy of the projective representations.

Suppose $\pi_C:\cC\lrar T$ is a smooth projective family of curves of genus $g$.
We can associate to this family, the relative moduli space
\begin{equation}\label{eq.-v}
\pi_S:\cSU_\cC(r)\lrar T
\end{equation}
of semi--stable vector bundles of rank $r$ and trivial determinant.
There is a relative polarization $\Theta$ on $\cSU_\cC(r)$, also called as the 
determinant bundle.  

The \textit{Verlinde bundles} $$\cV_{r,k}:=\,\pi_{S\,*}(\Theta^k)$$ are known to be equipped with a projectively flat connection (i.e., a flat connction on the projectivization $\p(\cV_{r,k})$), also called as Hitchin's connection (see \cite{Fa1}, \cite{Hi}).
We notice that $\Theta_C$ is not uniquely defined since we can tensor it by the pullback of any line bundle on $T$. This implies that the Verlinde bundles are
defined upto taking tensor product with a line bundle on $T$.

Let $\gamma_{r,k}=\m{dim}H^0(\cSU_{\cC_t}(r),\Theta^k_t)$ be the dimension of the space of sections
of $\Theta^k_t$. Then, by \cite{Be-La}, \cite{Fa2} we have the `Verlinde formula':

$$\gamma_{r,k}\,=\,(\f{r}{r+k})^g.\sum_{\sta{S\sqcup R=[1,r+k]}{|S|=r}}\prod_{\sta{s\in S}{z\in R}}|2.\m{sin}\,\pi\f{s-z}{r+k}|^{g-1}.$$

We show that there is a decomposition of the Verlinde bundle, of the form 
$$\bigoplus_{\chi\in \widehat{K(\delta)_{k}}}W_\chi\otimes F_\chi$$
over a suitable cover of $T$.
Here $W_\chi$ is an irreducible Heisenberg representation (of higher weight) and $F_\chi$ is a vector bundle on an \'etale cover of $T$ over any point (Proposition \ref{pr.-1}). This is an application of Mumford's Theorem \cite[Proposition 2, p.80]{Mu2} of theta groups, to the case of generalized theta functions.

 We indicate how the rank of the bundles $F_\chi$ can be computed (section $5$). This shows that the dimension of the isotypical components are different and the isotypical component corresponding to the trivial character is greater than the other components. This is in contrast with the abelian theta functions, where all the components are equi-dimensional (see \cite[Proposition 3.7]{Iy1}, which is stated for level $2$, but in fact it holds for any level).
As an application, we compute the Chern character of the level one Verlinde bundle in the rational Chow groups (Corollary \ref{co.-2}).

The proof is via a study of the Heisenberg group representations   \cite{Mu1}, \cite{Iy1}. 
We extend the action of the Heisenberg group to higher level Verlinde spaces to obtain our assertion.  
The action is prescribed in a more general set-up, i.e., for moduli of 
parabolic bundles. Our hope was to compute the multiplicities using 
degeneration of the moduli spaces with their polarizations and using the 
\textit{Factorisation theorems}. It then becomes essential to consider moduli of parabolic bundles with a $\cG$-action on the space of generalized theta sections . The Factorization theorems were proved by Faltings, Narasimhan, Ramadas, Sun  \cite{Fa2}, \cite{Na-Ra}, \cite{Su} and many other mathematicians in computing the Verlinde formula in some 
cases. It seemed difficult for us to carry out the computations with a $\cG$--action though. We include \S 3 for the interested 
readers who might want to use this approach.
 Beauville, Laszlo, Sorger \cite{Be1}, \cite{Be-La-So}, Andersen-Masbaum \cite{An-Ma} have treated special cases.
  
{\Small Acknowledgements:  We thank H. Esnault for bringing our attention to the the Verlinde bundles, in summer 2004.  
 We thank G. Masbaum for his interest on the contents and encouraging us to pursue it further.}

\section{The space $H^0(\cSU_C(r),\Theta_C)$ is a Heisenberg module}

All the varieties are considered over the field of complex numbers.

\subsection{Theta groups}
We recall the definition of the Theta group introduced by Mumford and refer to 
\cite{Mu1} for details.

Suppose $A$ is an abelian variety of dimension $g$ and let $L$ be an ample line bundle on $A$. Consider the translation map, for any $a\in A$ :
 $$t_a:A\lrar A,\,x\mapsto x+a.$$
Consider the group : 
$$K(L) \,= \, \{a\in A: L\simeq t^*_aL\}$$ 
and the Theta group of $L$:

$$\cG(L) \,=\, \{(a,\phi): L\sta{\phi}{\simeq }t^*_aL\}.$$

In particular there is a central extension :
$$1\lrar \comx^*\lrar \cG(L)\lrar K(L)\lrar 0.$$

\subsection{Heisenberg groups}

Fix positive integers $\delta_1,\delta_2,...,\delta_g$ such that $\delta_i$ divides $\delta_{i+1}$, for each $i$. The $g$--tuple $\delta=(\delta_1,\ldots,\delta_g)$ is called the $type$ of $\delta$.

Given a type $\delta$, write
\begin{eqnarray*}
K_1(\delta)& = & (\f{\Z}{\delta_1\Z}\times...\times \f{\Z}{\delta_g\Z}) \\
\widehat{K_1(\delta)} & = & \m{Group of characters on }K_1(\delta) \\
K(\delta) &=& K_1(\delta) \oplus \widehat{K_1(\delta)}.\\
\end{eqnarray*}

The Heisenberg group $Heis(\delta)$ is the set
$$\comx^*\times K(\delta)$$
with a twisted group law: $(\al,x,l).(\beta,y,m)=(\al.\beta.m(x),x+y,l.m)$  \cite{Mu1}. 

Consider the $\comx$--vector space 
$$V(\delta)\,=\,\{f:\f{\Z}{\delta_1\Z}\times...\times \f{\Z}{\delta_g\Z}\lrar \comx \}$$
and the action of $(\al,x,l)\in Heis(\delta)$ on $f\in V(\delta)$ is given as :
$$(\al,x,l).f(y)=\al l(y).f(x+y).$$

Then we have
\begin{theorem}\label{th.-mu1} 
The $\comx$--vector space $V(\delta)$
is of dimension equal to $\delta_1.\delta_2...\delta_g$ and is the unique irreducible representation of 
the 
Heisenberg group $Heis(\delta)$ such that $\al \in \comx^*$ acts by its natural character. 
\end{theorem}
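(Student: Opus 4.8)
The plan is to prove this in three stages: (i) exhibit $V(\delta)$ as a concrete model and check directly that the prescribed formula defines a representation on which $\comx^*$ acts by the natural character; (ii) prove irreducibility of this model; (iii) prove uniqueness among all such representations. The dimension count is immediate: as a set, $V(\delta)$ is the space of functions on the finite set $K_1(\delta) = \Z/\delta_1\Z \times \cdots \times \Z/\delta_g\Z$, whose cardinality is $\delta_1\cdots\delta_g$, so $\dim_\comx V(\delta) = \delta_1\cdots\delta_g$. For step (i), I would verify the cocycle identity by direct computation: applying $(\al,x,l)$ then $(\beta,y,m)$ to $f$ and comparing with the action of the product $(\al\beta\, m(x), x+y, lm)$ given by the twisted law; the point where the factor $m(x)$ appears is exactly what makes the two sides agree. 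That $\al\in\comx^*$ acts by multiplication by $\al$ is visible from the formula $(\al,x,l).f(y) = \al\, l(y)\, f(x+y)$ with $x=0$, $l=1$.

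For irreducibility (step ii), I would use a character/Schur-orthogonality argument or, more hands-on, the following: let $0\neq W\subseteq V(\delta)$ be an invariant subspace. The subgroup $\widehat{K_1(\delta)}\subseteq Heis(\delta)$ (elements $(1,0,l)$) acts diagonally on the basis of delta-functions $\{e_y\}_{y\in K_1(\delta)}$ — indeed $(1,0,l).e_y = l(y)\, e_y$ — and the characters $y\mapsto l(y)$ separate the points $y$, so each $e_y$ spans a distinct isotypical line for this action. Hence any invariant subspace is spanned by a subset of the $e_y$'s. But the subgroup $K_1(\delta)$ (elements $(1,x,1)$) acts by $(1,x,1).e_y = e_{y-x}$ (after checking signs against the formula $(\al,x,l).f(y)=\al l(y) f(x+y)$), permuting the basis transitively. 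Therefore the only nonzero invariant subspace is all of $V(\delta)$.

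For uniqueness (step iii) — which I expect to be the main obstacle — let $\rho$ be any representation of $Heis(\delta)$ on which $\comx^*$ acts by its natural character. Restrict $\rho$ to the commutative subgroup $\widehat{K_1(\delta)}$ and decompose into its isotypical pieces indexed by characters of $\widehat{K_1(\delta)}$, i.e. by elements of $K_1(\delta)$. Using the commutation relations in $Heis(\delta)$, conjugation by $(1,x,1)$ permutes these isotypical pieces according to translation by $x$ on $K_1(\delta)$, and this action is transitive; conclude all isotypical pieces have the same dimension $m$, and that the representation is induced from the stabilizer, hence determined once we know $m$. Then $\rho \cong V(\delta)^{\oplus m}$, and irreducibility forces $m=1$, giving uniqueness. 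The delicate points will be bookkeeping the nondegeneracy of the commutator pairing on $K(\delta)$ (which is what makes the orbit transitive and the stabilizer exactly $\comx^*\times$(a maximal isotropic subgroup)) and making sure the character-of-$\comx^*$ hypothesis is used to pin down the action rather than leaving an undetermined central twist. I would cite \cite{Mu1} for the standard version of this Stone–von Neumann argument and just indicate the adaptation to the type-$\delta$ setting.
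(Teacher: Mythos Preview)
Your proposal is correct and in fact supplies far more than the paper does: the paper's entire proof is the one-line citation ``See \cite[Proposition 1]{Mu1}.'' Your argument is the standard Stone--von Neumann proof that this citation points to --- diagonalizing under the abelian subgroup $\widehat{K_1(\delta)}$, noting that $K_1(\delta)$ permutes the eigenlines transitively (using nondegeneracy of the commutator pairing), and concluding both irreducibility and uniqueness --- so there is no genuine difference in approach, only in level of detail. The bookkeeping you flag (the sign in $(1,x,1).e_y=e_{y-x}$, and the role of the $\comx^*$-weight hypothesis in pinning down the central action) is handled correctly.
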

\begin{proof} See \cite[Proposition 1]{Mu1}.
\end{proof}

\noindent
\textbf{Definition}: If $W$ is a representation of the Heisenberg group $Heis(\delta)$ such that
$\al\in \comx^*$ acts as multiplication by $\al^l$, then we say that $W$ is a 
$Heis(\delta)$--module of \textit{weight} $l$.

We have the following result on higher weight $Heis(\delta)$--modules.

\begin{proposition}\label{pr.-iy1}
The set of irreducible representations of the Heisenberg group $Heis(\delta)$ of weight $l$
is in bijection with the set of characters on the subgroup of $l$--torsion elements,
$$K(\delta)_l \subset K(\delta).$$
Moreover the dimension of any such representation is 
$$\f{\delta_1...\delta_g}{(l,\delta_1)...(l,\delta_g)}.$$
If $\chi$ is a character on $K(\delta)_l$ and $W_\chi$ is the corresponding irreducible representation then 
$W_\chi\otimes \chi^{-1}$ is identified with the $Heis(\f{\delta}{l})$--representation 
$V(\f{\delta}{l})$ of weight $1$.
Here $\f{\delta}{l}=(\f{\delta_1}{(l,\delta_1)},...,\f{\delta_g}{(l,\delta_g)})$ and $(l,\delta_i)$ denotes the greatest common divisor of $l$ and $\delta_i$. 
\end{proposition}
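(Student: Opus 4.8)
The plan is to reduce everything to Theorem \ref{th.-mu1} (Mumford's uniqueness theorem for the weight-one irreducible) by exhibiting, for each character $\chi$ on $K(\delta)_l$, a natural subgroup of $Heis(\delta)$ that is itself a Heisenberg group of type $\f{\delta}{l}$ and on which the weight-$l$ representation, suitably twisted, becomes a weight-one representation. First I would analyze the commutator pairing on $K(\delta)$: the group law on $Heis(\delta)$ induces a nondegenerate alternating (bi-multiplicative) pairing $e:K(\delta)\times K(\delta)\lrar \comx^*$ coming from $(\al,x,l).(\beta,y,m).(\al,x,l)^{-1}.(\beta,y,m)^{-1}$, which on $K_1(\delta)\oplus\widehat{K_1(\delta)}$ is the canonical pairing $e((x,l),(y,m))=m(x)l(y)^{-1}$. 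The key structural observation is that for an element $g=(\al,x,l)\in Heis(\delta)$ of weight $w$, the $w$-th power lies in $\comx^*\times K(\delta)$ and in fact $g^w$ has $\comx^*$-component governed by whether $(x,l)\in K(\delta)_w$; more precisely, elements whose image in $K(\delta)$ is $w$-torsion form a subgroup $\widetilde{K(\delta)_w}$ whose image is exactly $K(\delta)_w$, and the restriction of $e$ to $K(\delta)_w$ has kernel $K(\delta)_w\cap wK(\delta)$.

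Next I would count characters and compute dimensions. A weight-$l$ irreducible $W$ restricts on $\comx^*$ to $\al\mapsto \al^l$; the center acts by a character, and because $\al^l=1$ forces the image to be an $l$-th root of unity, the elements of $Heis(\delta)$ lying over $K(\delta)_l$ act (after the weight-$l$ normalization) through a finite Heisenberg-type quotient. The isomorphism class of $W$ is pinned down by how the elements lying over the \emph{radical} of $e|_{K(\delta)_l}$ act, and this radical-action is exactly a character $\chi$ on $K(\delta)_l$ (one checks that the commutator pairing kills the radical, so these elements act by scalars, giving a well-defined character; conversely two irreducibles with the same $\chi$ are isomorphic by applying Theorem \ref{th.-mu1} to the quotient Heisenberg group of type $\f{\delta}{l}$). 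The count of such characters is $|K(\delta)_l|=\prod_i (l,\delta_i)^2$, matching the claimed bijection. For the dimension, one has $\dim W = \sqrt{[K(\delta):\text{(maximal isotropic containing radical)}]}$; a direct computation with the canonical pairing on $(\Z/\delta_i\Z)\oplus\widehat{(\Z/\delta_i\Z)}$ gives $\dim W=\prod_i \f{\delta_i}{(l,\delta_i)}=\f{\delta_1\cdots\delta_g}{(l,\delta_1)\cdots(l,\delta_g)}$, as asserted.

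Finally, for the identification $W_\chi\otimes\chi^{-1}\cong V(\f{\delta}{l})$: I would construct an explicit surjective homomorphism $Heis(\delta)\lrar Heis(\f{\delta}{l})$, or rather a map from a suitable subgroup, realizing $Heis(\f{\delta}{l})$ as the quotient of the preimage of $lK(\delta)+K(\delta)_l$-type data, and observe that twisting $W_\chi$ by the one-dimensional representation $\chi^{-1}$ (extended to $Heis(\delta)$ via the projection to $K(\delta)$ followed by $\chi^{-1}$ on the relevant piece) produces a representation on which the radical-over-subgroup now acts trivially, hence which factors through $Heis(\f{\delta}{l})$ with weight $1$. By Theorem \ref{th.-mu1} applied to $Heis(\f{\delta}{l})$, there is only one such irreducible, namely $V(\f{\delta}{l})$, and a dimension check ($\prod_i \f{\delta_i}{(l,\delta_i)}$) confirms irreducibility is preserved. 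The main obstacle I anticipate is bookkeeping: making the twist by $\chi^{-1}$ genuinely well-defined as a $Heis(\delta)$-module (one must extend $\chi$, defined only on the finite group $K(\delta)_l$, to a character used in twisting, and check compatibility with the cocycle), and tracking the compatibility of the commutator pairings under the passage $\delta\leadsto\f{\delta}{l}$ so that "$l$-torsion in $K(\delta)$" corresponds correctly to "all of $K(\f{\delta}{l})$" — the divisibility hypothesis $\delta_i\mid\delta_{i+1}$ is used here to keep the type normalized. Once these identifications are set up carefully, everything else is an application of Mumford's theorem coordinate-by-coordinate in $i=1,\ldots,g$.
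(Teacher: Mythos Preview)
The paper does not actually prove this proposition: its ``proof'' consists solely of citations to \cite[Proposition~3.2]{Iy1} and \cite[Proposition~5.1]{Iy2}. So there is no argument in the paper to compare your sketch against.

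That said, your outline is a sound reconstruction of the standard argument and is almost certainly what those references contain. The key structural point---that passing from weight~$1$ to weight~$l$ replaces the commutator form $e$ by $e^l$, whose radical on $K(\delta)$ is exactly the $l$-torsion subgroup $K(\delta)_l$, so that the centre of the effective Heisenberg group picks up a factor of $K(\delta)_l$ and the Stone--von~Neumann / Mumford uniqueness (Theorem~\ref{th.-mu1}) then applies to the quotient of type $\f{\delta}{l}$---is correct and is the heart of the matter. Your dimension count via $\sqrt{|K(\delta)/K(\delta)_l|}=\prod_i \delta_i/(l,\delta_i)$ is right, and the identification $K(\delta)/K(\delta)_l\cong K(\f{\delta}{l})$ (which you use implicitly) is exactly what makes the reduction go through.

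One small muddle: when you write ``the radical of $e|_{K(\delta)_l}$'' you really mean that $K(\delta)_l$ \emph{is} the radical of $e^l$ on all of $K(\delta)$; the restriction of $e^l$ to $K(\delta)_l$ is then identically trivial, which is why the preimage acts by a character. You eventually say this, but the phrasing is circuitous. The bookkeeping obstacle you flag---extending $\chi$ from $K(\delta)_l$ so that the twist $W_\chi\otimes\chi^{-1}$ is a genuine module and descends to $Heis(\f{\delta}{l})$---is real but routine: one lifts $K(\delta)_l$ to a level subgroup of the weight-$l$ theta group (possible precisely because $e^l$ is trivial there), and the twist is then defined relative to that splitting.
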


\begin{proof}
See \cite[Proposition 3.2]{Iy1} when $l=2$ and \cite[Proposition 5.1]{Iy2} when $l\,>\,2$.
\end{proof}
 
\subsection{ $H^0(\cSU_C(r),\Theta_C)$ as a $Heis(\delta)$--module of weight $1$}

Given a nonsingular projective curve $C$ of genus $g$ and integers $r,d$, the moduli space of semi--stable 
vector bundles of rank $r$ and degree $d$ is denoted by $\cU_C(r,d)$. The moduli space of semi--stable bundles on $C$ of rank $r$ and trivial 
determinant is denoted by $\cSU_C(r)$ and the ample polarization on it by $\Theta_C$ \cite{Dr-Na}.
 The Jacobian $J_C^n$ 
parametrises degree $n$ line bundles on $C$, upto isomorphisms. 

Notice that the subgroup $(J_C)_r$ of $r$-torsion points on $J_C$, acts on the moduli space $\cSU_C(r)$ 
$$E\mapsto E\otimes l, \m{ for } l\in Pic^0(C)_r=J(C)_r$$
 and it leaves 
$\Theta_C$ invariant \cite[p.178]{BNR}. 

Consider the commutative diagram (I):
\begin{eqnarray*}
\Theta^k_C & \sta{\phi}{\simeq} & \Theta^k_C\\
\downarrow \,& &\,\downarrow \\
\cSU_C(r)  & \sta{\otimes l_r}{\lrar} &\cSU_C(r)
\end{eqnarray*}
Here $l_r$ is the line bundle corresponding to a $r$-torsion point on $J(C)$.

Consider the group 
$$G_k(\Theta_C)\,=\,\{(l_r,\phi): \Theta^k_C \sta{\phi}{\simeq} (\otimes l_r)^*\Theta^k_C\}.$$
Then there is an exact sequence:
$$1\lrar \comx^*\lrar G_k(\Theta_C)\lrar J(C)_r \lrar 0$$ 
which is a central extension.

We recall the constructions in \cite{BNR} which leads to a description of the
 vector space $H^0(\cSU_C(r),\Theta_C)$.

Firstly, the moduli space $\cU_C(r,d)$ is described as follows.

\begin{theorem}\label{th.-bnr}
There is a $r$--sheeted (ramified) covering $\pi:C'\lrar C$ with $C'$ nonsingular and irreducible such 
that the rational map $\pi_*:J_{C'}^\beta\lrar \cU_C(r,d)$ is dominant. The indeterminacy
locus of $\pi_*$ is of codimension at least $2$ and $\beta\,=\,d\,-\,\m{deg}\pi_*(\cO_{C'})$.
\end{theorem}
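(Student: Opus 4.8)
The plan is to realise $\cU_C(r,d)$, generically, as a family of Jacobians of \emph{spectral curves} lying inside the total space of a suitable line bundle on $C$, along the lines of the Hitchin and Beauville--Narasimhan--Ramanan correspondence. I would fix a line bundle $M$ on $C$ of degree $m$, with $m$ large (exactly how large gets pinned down below), let $\mathbb{V}(M)=\m{Spec}_C(\m{Sym}^\bullet M^{-1})$ be its total space with projection $p$ and tautological section $y\in H^0(\mathbb{V}(M),p^*M)$, and for a tuple $a=(a_1,\dots,a_r)$ with $a_i\in H^0(C,M^i)$ let $C'_a\subset\mathbb{V}(M)$ be the zero scheme of $y^r+p^*a_1\,y^{r-1}+\dots+p^*a_r$, a degree-$r$ cover $\pi=\pi_a$ of $C$. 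Compactifying $\mathbb{V}(M)$ inside the ruled surface $\mathbb{P}(\cO_C\oplus M)$, the curves $C'_a$ become members of a linear system which is very ample for $m$ large, so by Bertini the generic $C'_a$ is smooth and irreducible; write $\cW\subset\bigoplus_{i=1}^r H^0(C,M^i)$ for the nonempty, hence dense, open locus of such $a$. Two numerical facts come for free. Since $\pi$ is finite, $R^{>0}\pi_*=0$ and $\chi(\pi_*\cL)=\chi(\cL)$ for every line bundle $\cL$ on $C'_a$; comparing this for $\cL$ and for $\cO_{C'_a}$ gives $\deg\pi_*\cL-\deg\pi_*\cO_{C'_a}=\deg\cL$, so $\pi_*$ sends $J^\beta_{C'_a}$ into $\cU_C(r,d)$ precisely when $\beta=d-\deg\pi_*\cO_{C'_a}$, the asserted value. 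And the branch locus of $\pi_a$ is the vanishing of the discriminant of $y^r+a_1y^{r-1}+\dots+a_r$, a section of $M^{r(r-1)}$, so Riemann--Hurwitz gives $g':=g(C'_a)=r(g-1)+1+\f{r(r-1)}{2}m$, whence $g'-\dim\cU_C(r,d)=\binom{r}{2}(m-2g+2)>0$ once $m>2g-2$; I would fix such an $m$.

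The core of the proof is dominance of the rational map $\pi_*:J^\beta_{C'_a}\lrar\cU_C(r,d)$ for the generic $a$. Here I would use the spectral dictionary: on the integral curve $C'_a$ a line bundle $\cL$ is the same datum as a pair $(E,\phi)$ with $E=\pi_{a*}\cL$ of rank $r$ on $C$ and $\phi\in H^0(C,\m{End}(E)\otimes M)$ having characteristic polynomial $y^r+a_1y^{r-1}+\dots+a_r$, where $\phi$ is the action of $y$ on $\cL$ and $\cL$ is recovered from $(E,\phi)$. Running this in families, let $q:\cJ\lrar\cW$ be the relative degree-$\beta$ Jacobian and $\Phi:\cJ\lrar\cU_C(r,d)$ the (rational) relative pushforward; the fibre of the rational map $(q,\Phi):\cJ\lrar\cW\times\cU_C(r,d)$ over a pair $(a,E)$ is then the set of $\phi\in H^0(C,\m{End}(E)\otimes M)$ with characteristic polynomial $a$. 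The key input, which is where the largeness of $m$ is spent, is that for the generic stable bundle $E$ one has $h^1(\m{End}(E)\otimes M)=0$ and the characteristic-polynomial map
$$H^0(C,\m{End}(E)\otimes M)\lrar\bigoplus_{i=1}^r H^0(C,M^i)$$
is dominant; since $\cW$ is dense in the target, a generic such $\phi$ then has smooth irreducible spectral curve and $\cL_\phi$ is a genuine line bundle on it with $\pi_*\cL_\phi=E$.

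From this, together with the irreducibility of $\cJ$, $\cW$ and $\cU_C(r,d)$, I would extract two conclusions. First, the displayed fibre of $(q,\Phi)$ has the expected dimension, and feeding this into $\dim\cJ=\dim\cW+g'$ forces $\overline{\m{im}(q,\Phi)}=\cW\times\cU_C(r,d)$; a rational map dominating $\cW\times\cU_C(r,d)$ restricts, over the generic point of $\cW$, to a dominant rational map $J^\beta_{C'_a}\lrar\cU_C(r,d)$. Second, the closed locus $\mathcal{B}\subseteq\cJ$ on which $\pi_*\cL$ fails to be semistable is proper --- a generic, hence semistable, $E$ lies in the image of $\Phi$ but not in $\Phi(\mathcal{B})$ --- so its complement is dense and dominates $\cW$, and therefore for the generic $a$ the bundle $\pi_{a*}\cL$ is semistable for $\cL$ in a dense open subset of $J^\beta_{C'_a}$; that is, $\pi_{a*}$ really is a rational map there. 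Choosing $C':=C'_a$ for an $a$ satisfying all of these finitely many genericity conditions gives the curve claimed in the statement.

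Finally, for the codimension assertion I would invoke the standard fact that a rational map from a smooth variety to a projective variety is defined outside a closed subset of codimension at least $2$ --- it extends over every codimension-one point by the valuative criterion of properness at the discrete valuation ring there --- which applies because $J^\beta_{C'}$ is smooth and $\cU_C(r,d)$ is projective. The step I expect to be the genuine obstacle is the dominance of the characteristic-polynomial map on $H^0(C,\m{End}(E)\otimes M)$ for the generic stable $E$: the inequality $g'>\dim\cU_C(r,d)$ only makes dominance of $\pi_*$ plausible, and one really has to construct, for a generic bundle $E$, a twisted endomorphism with a prescribed generic characteristic polynomial, equivalently with a given smooth spectral curve. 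Everything else is a Bertini argument, a Riemann--Hurwitz computation, dimension counts, and the boilerplate on rational maps into projective varieties.
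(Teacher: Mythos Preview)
The paper does not give its own proof of this statement; it simply cites \cite[Theorem~1]{BNR}. Your sketch is precisely the Beauville--Narasimhan--Ramanan spectral-curve construction that is being cited, so in that sense you have reproduced the intended argument rather than diverged from it.

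A couple of remarks on the details. Your computation of $\beta$ and of $g'$ is correct, and the codimension-$2$ claim does follow from the standard fact you quote, since $J^\beta_{C'}$ is smooth and $\cU_C(r,d)$ is projective. The step you flag as the genuine obstacle --- dominance of the characteristic-polynomial map $H^0(C,\m{End}(E)\otimes M)\to\bigoplus_i H^0(C,M^i)$ for a generic stable $E$ --- is indeed where the content lies, and it is exactly what BNR establish (via the spectral correspondence between line bundles on $C'_a$ and $M$-twisted Higgs pairs on $C$, together with a deformation/dimension argument). One small sharpening: in BNR the relevant codimension estimate is often stated for the locus where $\pi_*\cL$ is \emph{not semistable}, which is a priori larger than the indeterminacy locus of the rational map; your argument handles only the latter, which is what the theorem as stated here asks for, but if you later need the stronger statement you will have to argue separately.
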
 

\begin{proof} 
See \cite[Theorem 1]{BNR}.
\end{proof}

Let $\sigma=(\m{det}\pi_*\cO_{C'})^{-1}$ be the line bundle and consider the 
norm map 
$$
Nm: J_{C'}^{\m{deg}\sigma}\lrar J_C^{\m{deg}\sigma}.
$$
Let $P'= Nm^{-1}\sigma$ be the variety associated to the ramified covering $\pi: C'\lrar C$. We denote
$g= \m{ genus of }C$ and $g'=\m{ genus of }C'$. Then
there is a commutative diagram (I) (\cite[Proposition 5.7, p. 178]{BNR}) :

\begin{eqnarray*}
P'\times J^{g-1}_C & \sta{is}{\lrar} & J^{g'-1}_{C'}\\
\downarrow \pi_{*,is}\,& &\,\,\,\,\downarrow \pi_*\\
\cSU_C(r)\times  J^{g-1}_C & \sta{is_U}{\lrar} &\cU(r,r(g-1))
\end{eqnarray*}
and satisfying:

P.1. the morphism $is$ is an isogeny of degree $r^{2g}$ and $is_U$ is the map given by tensor product. Further, $(is_U)^*\Theta_U \simeq p_1^*\Theta_C \otimes p_2^*\Theta_J$, for the natural projections $p_i$.

P.2. $\pi_*$ induces a dominant (generically finite) rational map
$$\pi_{*,is}:P'\lrar \cSU_C(r).$$ 
The indeterminacy locus of $\pi_{*,is}$ is of codimension at least $2$.

P.3. $\Theta_{P'} = (\pi_{*,is})^*(\Theta_C)$ is a primitive line bundle (i.e., not a power of another line bundle) and is of type $\delta=(1,1,...,1,r,r,...,r)$. Here $r$ occurs $g$--times.

P.4. The subgroup $(J_C)_r$ of $r$--torsion points of $J_C$ acts on $\cSU_C(r)$ and leaves
the line bundle $\Theta_C$ invariant. 
There is a $\cG(\Theta_{P'}) $--action on the
sections of $\Theta_{P'}$ such that
the pullback map $H^0(\cSU_C(r),\Theta_C)\lrar H^0(P',\Theta_{P'})$ is equivariant for
this group and the pullback map is an isomorphism.

 Consider the commutative diagram (II):
\begin{eqnarray*}
P' \,\,\,\,\,\,\,\,\,\, & \sta{\otimes l_r}{\lrar} & \,\,\,P'\\
\downarrow \pi_{*,is}\,& &\,\,\,\,\downarrow \pi_{*,is}\\
\cSU_C(r)  & \sta{\otimes l_r}{\lrar} &\cSU_C(r)
\end{eqnarray*}
Here $l_r \in \m{Pic}^0(C)_r=J(C)_r$.

\begin{remark}\label{re.-is}
 Notice that \textrm{P.3} and (II) imply that $G_1(\Theta_C)\simeq \cG(\Theta_{P'})$.
Indeed, the map is given by $(l_r,\phi)\mapsto (l_r,(\pi_{*, is})^*\phi)$ which is injective and hence an isomorphism.
Further, this implies that the Weil pairing (given by the commutator map) on $J(C)_r$, corresponding to the extension
$$1\lrar \comx^*\lrar G_1(\Theta_C)\lrar J(C)_r\lrar 0$$
is nondegenerate. Also, $G_1(\Theta_C)$ acts on $H^0(SU_C(r),\Theta_C)$ with weight $1$ and is an irreducible representation.
\end{remark}

\begin{remark}\label{re.-is3}
The above mentioned remark can be extended to the following case:
consider the moduli space $\cSU_C(r,\eta)$ of semi-stable bundles with fixed determinant $\eta$.
Now $l_r\in J(C)_r$ acts on $\cSU_C(r,\eta)$ as $E\mapsto E\otimes l_r$. 
Since $\m{Pic }\cSU_C(r,\eta)=\Z.\Theta_C$ (\cite{Dr-Na}) any point $l_r$ of $J(C)_r$ corresponds to a finite order automorphism of $\cSU_C(r,\eta)$, we have $\Theta_C\simeq (\otimes l_r)^*\Theta_C$.
As earlier we can form the group of automorphisms $G_1(\Theta_C)$ of $\Theta_C$.
Further, there is a Weil form on $J(C)_r$, given by the commutator map associated to the extension,
$$1\lrar \comx^*\lrar G_1(\Theta_C)\lrar J(C)_r\lrar 0.$$
This form is nondegenerate since $\Theta_C$ is primitive.
 In other words, $G_1(\Theta_C)$ can be identified with the standard Heisenberg group $Heis(\delta)$, where
$\delta=(r,...,r)$ and $r$ occurs $g$-times.  
\end{remark}

\begin{remark}\label{re.-is2}
Since there is a (surjective) homomorphism
$$G_1(\Theta_C)\lrar G_k(\Theta_C),\,(x,\phi) \mapsto (x,\phi^{\otimes k}) $$
we see that $G_1(\Theta_C)$ acts on $H^0(SU_C(r,\eta),\Theta^k_C)$ and $\al\in \comx^*$ acts as $\al\mapsto \al^k$, i.e., with weight $k$. 
\end{remark}

\section{Parabolic case}

Suppose $C$ is a nonsingular projective connected curve of genus $g$ and $E$ is a vector bundle on $C$. Fix a 
parabolic data $\Delta$:

$$ S=\{x_i: 1\leq i\leq n\}\subset C \m{ is a finite set of }n \m{ distinct points},$$
fix a positive integer $m$ and for each $x\in S$ associate a sequence of integers
$$0<a_1(x)< a_2(x)<...< a_{l_x+1}(x)<m$$
called \textit{weights} $a(x)=(a_1(x),...,a_{l_x+1}(x))$. 
The weights $a(x)$ have \textit{multiplicities} $n(x)=(n_1(x),n_2(x),...,n_{l_x+1}(x))$
associated to a flag of the fibre $E_x$
$$E(x)=F_0(E_x)\supset F_1(E_x)\supset ... F_{l_x}(E_x)\supset F_{l_x+1}(E_x)=0$$
such that $n_j(x)=\m{dim}(\f{F_{j-1}(E_x)}{F_j(E_x)})$.

Consider the moduli space $\cSU_C(r,\Delta)$ of vector bundles of rank $r$ and trivial 
determinant and which are semi--stable with respect to the parabolic data $\Delta$. Then 
$\cSU_C(r,\Delta)$ is a projective variety (\cite{Me-Se}).
There is a \textit{parabolic theta line bundle} $\Theta_\Delta$ on $\cSU_C(r,\Delta)$ which is ample 
(\cite[Theorem 1.(A)]{Na-Ra}).  

We briefly recall the constructions (see also \cite{Su}):

Consider the Quot-scheme $\cQ$ of coherent sheaves of rank $r$ and degree $0$ over $C$ and trivial determinant, which are 
quotients of $\cO^{P(N)}(-N)$, with a fixed Hilbert polynomial $P$. Here $N$ is chosen large enough
so that every $\Delta$--parabolic semi--stable vector bundle with Hilbert
polynomial $P$ occurs as a point in $\cQ$.

Thus on $C\times\cQ$, there is a universal sheaf $\cF$, flat over $\cQ$ and denote the restriction
on $x\times \cQ$ by $\cF_x$, for $x\in S$. Let 
$$Flag_{n(x)}(\cF_x)\lrar \cQ$$ 
be the relative Flag scheme of type $n(x)$.  Consider the fibre product
$$\cR\,=\, \times_{x\in S}Flag_{n(x)}(\cF_x) \sta{pr}{\lrar} \cQ. $$
Let $\cR^{ss}\subset \cR$ denote the open subscheme of $\cR$ whose points correspond to 
$\Delta$--parabolic semi--stable bundles with trivial determinant. 
The pullback of $\cF\lrar C\times \cQ$, under $Id\times pr$, to $C\times \cR^{ss}$ is still denoted by 
$\cF$.

Denote the quotients 
$$\cQ_{x,i}\,=\, \f{\cF_{x}}{F_i(\cF_x)}.$$
The parabolic theta line bundle is defined as
\begin{equation}\label{eq.-t1}
\Theta_{\Delta}\,=\,(\m{det}R\pi_*(\cF))^m\otimes \bigotimes_{x\in S}((\m{det}\cF_x)^{m-a_{l_x+1}}\otimes \bigotimes_{i=1}^{l_x} 
(\m{det}\cQ_{x,i})^{a_{i+1}(x)-a_i(x)}).
\end{equation}

Here $\pi:C\times \cR^{ss}\lrar \cR^{ss}$ is the second projection and 
$$\m{det}R\pi_*(\cF)\,=\,(\m{det}\pi_*\cF)^{-1}\otimes \m{det}R^1\pi_*(\cF).$$

The variety $\cSU_C(r,\Delta)$ is the `good quotient' of $\cR^{ss}$ under the action of $SL(P(N))$.
The ample line bundle $\Theta_\Delta$ descends to an ample line bundle on $\cSU_C(r,\Delta)$ and is 
still denoted by $\Theta_\Delta$.

\begin{remark}\label{re.-p1}
Consider the open subscheme $\cQ^0\subset \cQ$ whose points correspond to semi--stable vector bundles
(in the usual sense). Then $SL(P(N))$ acts on $\cQ^0$ and there are rational dominant maps

\begin{equation}\label{eq.-p1}
q_1:\cQ^0\lrar \cSU_C(r)
\end{equation}
\begin{equation}\label{eq.-p2}
q_2:\cSU_C(r,\Delta)\lrar \cSU_C(r)
\end{equation}
\end{remark}
\begin{remark}\label{re.-p2}
 Further, the ample line bundle $\m{det}R\pi_*(\cF)$ on $\cQ^0$ descends to the theta line bundle
$\Theta_C$ on $\cSU_C(r)$. If $m=1$, we write $\cSU_C(r,\Delta)=\cSU_C(r)$.
\end{remark}

\subsection{The space $H^0(\cSU_C(r,\Delta),\Theta_\Delta)$ is a $G_1(\Theta_C)$--module}

Firstly, notice that the group $J(C)_r$ acts on the moduli space $\cSU_C(r,\Delta)$:
$$E\mapsto E\otimes l_r$$
for a line bundle $l_r\in \m{Pic}^0C=J(C)_r$.

In fact, there is a commutative diagram (III):
\begin{eqnarray*}
\cSU_C(r,\Delta) & \sta{\otimes l_r}{\lrar} & \cSU_C(r,\Delta)\\
\downarrow q_2\,& &\,\,\,\,\downarrow q_2\\
\cSU_C(r)  & \sta{\otimes l_r}{\lrar} &\cSU_C(r)
\end{eqnarray*}
 
\begin{lemma}\label{le.-t1}
Suppose the indeterminacy of the map $q_2$ is of codimension at least $2$. Then
the vector space $H^0(\cSU_C(r,\Delta),\Theta_\Delta)$ is a $G_1(\Theta_C)$--module of weight $m$.
\end{lemma}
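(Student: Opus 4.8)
The plan is to transport the $G_1(\Theta_C)$--action on the base moduli space $\cSU_C(r)$ up to $\cSU_C(r,\Delta)$ along the map $q_2$, and then to check that it lifts to the line bundle $\Theta_\Delta$ with the correct weight. First I would fix $l_r\in J(C)_r$ and an isomorphism $\phi\colon\Theta_C\sta{\sim}{\lrar}(\otimes l_r)^*\Theta_C$, i.e.\ an element $(l_r,\phi)\in G_1(\Theta_C)$. Using the commutative diagram (III), the automorphism $\otimes l_r$ of $\cSU_C(r,\Delta)$ sits over the automorphism $\otimes l_r$ of $\cSU_C(r)$. The key point is that the parabolic theta bundle $\Theta_\Delta$ is \emph{not} an arbitrary line bundle: by its construction \eqref{eq.-t1} out of $\m{det}R\pi_*(\cF)$ and the determinants of the flag quotients, one has (on $\cR^{ss}$, hence after descent) a canonical identification $\Theta_\Delta\simeq q_2^*(\Theta_C^{\,m})\otimes(\text{parabolic correction factor})$, where the correction factor is built from the $\m{det}\cF_x$ and $\m{det}\cQ_{x,i}$. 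The tensoring automorphism $E\mapsto E\otimes l_r$ acts on each fibre $\cF_x$ by tensoring with the one-dimensional space $(l_r)_x$, and since $l_r$ has degree $0$ these determinant twists cancel in a way compatible with flags (tensoring by a line bundle preserves the flag type $n(x)$ and multiplies every $\m{det}\cF_x$, $\m{det}\cQ_{x,i}$ by the same character of $J(C)_r$, which then cancels against the determinant condition $\det E=\cO_C$). So the parabolic correction factor is left invariant (canonically, up to the $\comx^*$ ambiguity that is exactly the center of the Heisenberg group), and $(\otimes l_r)^*\Theta_\Delta\simeq q_2^*((\otimes l_r)^*\Theta_C^{\,m})\otimes(\text{same factor})$.

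Next I would combine this with $\phi$: raising $\phi$ to the $m$--th power gives $\phi^{\otimes m}\colon\Theta_C^{\,m}\sta{\sim}{\lrar}(\otimes l_r)^*\Theta_C^{\,m}$, pulling back along $q_2$ and tensoring with the identity on the correction factor yields an isomorphism $\widetilde\phi\colon\Theta_\Delta\sta{\sim}{\lrar}(\otimes l_r)^*\Theta_\Delta$. This is where the hypothesis that the indeterminacy locus of $q_2$ has codimension $\geq 2$ is used: the identification of $\Theta_\Delta$ with $q_2^*\Theta_C^{\,m}\otimes(\text{factor})$ is a priori only valid over the locus where $q_2$ is a morphism, but a line bundle isomorphism defined outside a codimension--$\geq 2$ closed subset of a normal variety extends uniquely (Hartogs), and the $SL(P(N))$--descent respects this. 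Thus the pair $(l_r,\widetilde\phi)$ defines an automorphism of $\Theta_\Delta$ covering $\otimes l_r$, and one gets a group homomorphism from $G_1(\Theta_C)$ into the group of such automorphisms, hence an action of $G_1(\Theta_C)$ on $H^0(\cSU_C(r,\Delta),\Theta_\Delta)$ by pullback: $(l_r,\phi)\cdot s := (\otimes l_r)^*s$ composed with $\widetilde\phi^{-1}$.

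Finally I would check the weight. By construction $\widetilde\phi$ was obtained from the $m$--th power $\phi^{\otimes m}$, so an element $\al\in\comx^*\subset G_1(\Theta_C)$, which acts on $\Theta_C$ (and on $H^0(\cSU_C(r),\Theta_C)$) by multiplication by $\al$, acts on $\Theta_\Delta$ through $\al^m$, i.e.\ with weight $m$ on $H^0(\cSU_C(r,\Delta),\Theta_\Delta)$ — exactly as in Remark \ref{re.-is2} for the level--$k$ case on $\cSU_C(r,\eta)$. I expect the main obstacle to be bookkeeping of the determinant twists in \eqref{eq.-t1}: showing cleanly that tensoring by a degree--$0$, $r$--torsion line bundle leaves the whole parabolic correction factor canonically invariant (not merely abstractly isomorphic), so that the $\comx^*$--ambiguities organize themselves into a single central $\comx^*$ acting with weight $m$, rather than contributing spurious extra characters. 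Once that compatibility is in place, associativity of the action and the homomorphism property follow formally from the corresponding facts for $G_1(\Theta_C)$ acting on $\Theta_C$, together with the functoriality of pullback.
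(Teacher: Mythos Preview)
Your proposal is correct and follows essentially the same route as the paper: decompose $\Theta_\Delta \simeq q_2^*\Theta_C^{\,m}\otimes M$ via \eqref{eq.-t1}, use the codimension--$\geq 2$ hypothesis so that $q_2^*\Theta_C$ (and hence the pulled-back isomorphism) is defined on all of $\cSU_C(r,\Delta)$, set $\tilde\phi = q_2^*\phi^{\otimes m}\otimes \mathrm{Id}_M$, and read off weight $m$ from the $m$-th power. The paper is terser---it simply writes ``$\otimes\,\mathrm{Id}$'' on the parabolic factor $M$ without your extended discussion of why $(\otimes l_r)^*M$ is canonically $M$---but the argument is the same.
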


\begin{proof}
Since the indeterminacy of $q_2$ is of codimension at least $2$, the pullback of $\Theta_C$ defines a line bundle on
$\cSU_C(r,\Delta)$. 
Further, it follows from \eqref{eq.-t1} that, $\Theta_\Delta = q^*\Theta^m_C \otimes M$, for some line bundle $M$ on $\cSU_C(r,\Delta)$ which is not a pullback from $\cSU_C(r)$.
Hence, given an element $(l_r,\phi)\in G_1(\Theta_C)$,
there is an isomorphism

$$\tilde\phi=q^*\phi^{\otimes m}\otimes Id: \Theta_\Delta \simeq (\otimes l_r)^*\Theta_\Delta$$
over $\cSU_C(r,\Delta)$.

This gives an action of $G_1(\Theta_C)$ on the space of sections $H^0(\cSU_C(r,\Delta),\Theta_\Delta)$.
In particular, the scalars act as $\al\mapsto \al^m$. This proves our assertion.

\end{proof}

\begin{corollary}\label{co.-5}
The vector space $H^0(\cSU_C(r,\Delta),\Theta^k_\Delta)$ is a $G_1(\Theta_C)$--module of weight $km$.
\end{corollary}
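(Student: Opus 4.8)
The plan is to deduce Corollary \ref{co.-5} from Lemma \ref{le.-t1} by applying the same argument with $\Theta_\Delta$ replaced by its $k$-th power, using the fact that $G_1(\Theta_C)$-equivariant structures behave multiplicatively under tensor powers. First I would note that by the proof of Lemma \ref{le.-t1} we have a decomposition $\Theta_\Delta \simeq q^*\Theta^m_C \otimes M$ on $\cSU_C(r,\Delta)$, so raising to the $k$-th power gives $\Theta^k_\Delta \simeq q^*\Theta^{km}_C \otimes M^{\otimes k}$. Then, given an element $(l_r,\phi) \in G_1(\Theta_C)$, the isomorphism $\phi:\Theta_C \simeq (\otimes l_r)^*\Theta_C$ induces $q^*\phi^{\otimes km}\otimes Id : \Theta^k_\Delta \simeq (\otimes l_r)^*\Theta^k_\Delta$, using that $q\circ(\otimes l_r) = (\otimes l_r)\circ q$ from diagram (III) and that $M^{\otimes k}$ — like $M$ — is automatically $(\otimes l_r)$-invariant since $\mathrm{Pic }\,\cSU_C(r,\Delta)$ is such that the finite-order automorphism $\otimes l_r$ fixes every line bundle (compare Remark \ref{re.-is3}), so the identity serves as the isomorphism on that factor.

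Next I would check the group-theoretic compatibility: the assignment $(l_r,\phi)\mapsto (l_r, q^*\phi^{\otimes km}\otimes Id)$ is a homomorphism, so it genuinely defines an action of $G_1(\Theta_C)$ on $H^0(\cSU_C(r,\Delta),\Theta^k_\Delta)$ in the usual way (an element acts by pulling back a section along $\otimes l_r$ and then applying the chosen isomorphism of line bundles). Since $\phi^{\otimes km}$ replaces $\phi$, a scalar $\al \in \comx^*\subset G_1(\Theta_C)$, which acts on $\Theta_C$ as multiplication by $\al$, now acts on $\Theta^k_\Delta$ as multiplication by $\al^{km}$; hence $\al$ acts on the section space with weight $km$. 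This is exactly the claim. Alternatively, and more economically, one can simply apply Lemma \ref{le.-t1} itself to the parabolic data with multiplier $km$ in place of $m$ — but it is cleaner to argue directly, since the weights $a_i(x)$ also enter \eqref{eq.-t1} and one does not want to re-choose the parabolic data; the direct tensor-power argument avoids this.

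The only point requiring a little care is the claim that $M^{\otimes k}$ (equivalently $M$) is $(\otimes l_r)^*$-invariant, i.e. that the automorphism $\otimes l_r$ of $\cSU_C(r,\Delta)$ acts trivially on its Picard group. This is the analogue for the parabolic moduli space of the fact used for $\cSU_C(r)$ and $\cSU_C(r,\eta)$, and it follows because $\otimes l_r$ has finite order and the relevant Picard group is finitely generated free (so a finite-order automorphism of it is trivial), together with $q_2\circ(\otimes l_r)=(\otimes l_r)\circ q_2$; under the running hypothesis that the indeterminacy of $q_2$ has codimension $\ge 2$, line bundles extend uniquely across the locus of indeterminacy, so the identification $\Theta^k_\Delta \simeq q^*\Theta^{km}_C\otimes M^{\otimes k}$ and the invariance of $M^{\otimes k}$ are legitimate on all of $\cSU_C(r,\Delta)$. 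Once this is in hand the rest is a formal repetition of the proof of Lemma \ref{le.-t1}, and I expect no further obstacle.
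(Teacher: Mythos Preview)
Your argument is correct and is essentially the paper's own proof: the paper simply takes the isomorphism $\tilde\phi$ from Lemma~\ref{le.-t1} and observes that $\tilde\phi^{\otimes k}:\Theta^k_\Delta \simeq (\otimes l_r)^*\Theta^k_\Delta$, so $\al\in\comx^*$ acts as $\al^{km}$. Your extra paragraph on the $(\otimes l_r)^*$-invariance of $M^{\otimes k}$ is unnecessary here, since the invariance of $M$ was already used (via the ``$\otimes\,Id$'') in Lemma~\ref{le.-t1} and passes to tensor powers automatically.
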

\begin{proof}
Indeed, as shown in Lemma \ref{le.-t1}, $(l_r,\phi)\in G_1(\Theta_C)$ induces isomorphisms
$$ \Theta^k_\Delta \sta{\tilde\phi^{\otimes k}}{\simeq} (\otimes l_r)^* \Theta^k_\Delta $$
over $\cSU_C(r,\Delta)$. 
Thus $\al\in \comx^*$ acts on $H^0(\cSU_C(r,\Delta),\Theta^k_\Delta)$ as $\al\mapsto \al^{km}$.
\end{proof}

Suppose $\delta=(r,r,...,r)$ with $r$ occuring $g$ times.

\begin{lemma}\label{le.-iy3}
Given  a level $r$--structure on the Jacobian $J(C)$, there is an isotypical decomposition

$$H^0(\cSU_C(r,\Delta),\Theta^k_\Delta)\,\simeq\, \bigoplus_{\chi\in \widehat{K(\delta)_{km}}} \, n_\chi.W_\chi$$ 

where $W_\chi$ is an irreducible representation of $Heis(\delta)$ of weight $km$. Moreover,
$W_\chi\otimes \chi^{-1}$ is identified with the $Heis(\f{\delta}{km})$--representation 
$V(\f{\delta}{km})$ of weight $1$.
\end{lemma}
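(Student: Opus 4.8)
The plan is to combine the weight computation from Corollary \ref{co.-5} with the abstract representation theory of Proposition \ref{pr.-iy1}. First I would invoke Corollary \ref{co.-5} to conclude that $H^0(\cSU_C(r,\Delta),\Theta^k_\Delta)$ is a $G_1(\Theta_C)$-module of weight $km$. By Remark \ref{re.-is3}, a choice of level $r$-structure on $J(C)$ — i.e.\ a symplectic basis of $J(C)_r$ for the Weil form attached to the central extension — produces an isomorphism $G_1(\Theta_C)\simeq Heis(\delta)$ with $\delta=(r,\dots,r)$ ($g$ times), under which the central $\comx^*$ matches up with the natural character. Thus $H^0(\cSU_C(r,\Delta),\Theta^k_\Delta)$ becomes a $Heis(\delta)$-module of weight $km$.

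Next I would decompose this $Heis(\delta)$-module into irreducibles. Since $\comx^*$ acts by a single character $\al\mapsto\al^{km}$, every irreducible constituent has weight $km$, so by Proposition \ref{pr.-iy1} the irreducible constituents are exactly the $W_\chi$ indexed by characters $\chi$ of the subgroup $K(\delta)_{km}\subset K(\delta)$ of $km$-torsion points. Writing the decomposition into isotypical pieces gives
$$H^0(\cSU_C(r,\Delta),\Theta^k_\Delta)\,\simeq\,\bigoplus_{\chi\in\widehat{K(\delta)_{km}}}n_\chi.W_\chi$$
for nonnegative integers $n_\chi$. The final clause — that $W_\chi\otimes\chi^{-1}\cong V(\delta/km)$ as a weight-one $Heis(\delta/km)$-module — is precisely the last assertion of Proposition \ref{pr.-iy1} applied with $l=km$, where $\delta/km=(r/(km,r),\dots,r/(km,r))$.

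The main obstacle is a bookkeeping/consistency issue rather than a deep one: one must check that the hypotheses of Proposition \ref{pr.-iy1} genuinely apply, i.e.\ that the $G_1(\Theta_C)$-action really is a Heisenberg action in the strict sense of \S 2.2 (nondegenerate commutator form, central character), which is exactly what Remark \ref{re.-is3} supplies once the codimension-$\geq 2$ indeterminacy hypothesis on $q_2$ guarantees $\Theta_C$ pulls back to an honest line bundle on $\cSU_C(r,\Delta)$. A secondary point worth a remark is that Proposition \ref{pr.-iy1} requires $km>1$ to have nontrivial content; the case $km=1$ reduces to Theorem \ref{th.-mu1} and Remark \ref{re.-is} (the module is then irreducible). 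Otherwise the statement is a formal consequence: complete reducibility of finite-dimensional representations of $Heis(\delta)$ (a finite extension of a finite group) gives the direct-sum decomposition, and indexing the isotypical components by $\widehat{K(\delta)_{km}}$ together with the identification $W_\chi\otimes\chi^{-1}\cong V(\delta/km)$ is read off directly from Proposition \ref{pr.-iy1}.
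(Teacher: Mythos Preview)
Your proposal is correct and follows essentially the same approach as the paper: invoke Corollary~\ref{co.-5} to get the weight-$km$ action of $G_1(\Theta_C)$, use the level $r$-structure to identify $G_1(\Theta_C)\simeq Heis(\delta)$, and then apply Proposition~\ref{pr.-iy1} to obtain the isotypical decomposition and the identification $W_\chi\otimes\chi^{-1}\cong V(\delta/km)$. The only difference is that the paper spells out explicitly, following \cite[p.~318]{Mu1}, how a level $r$-structure lifts to an isomorphism of the central extensions via a choice of level subgroups over the two Lagrangian pieces $K_1(\delta)$ and $\widehat{K_1(\delta)}$, whereas you subsume this step into your citation of Remark~\ref{re.-is3}.
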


\begin{proof}  
A level $r$--structure $h:J(C)_{r}\simeq K(\delta)$ is induced by an isomorphism 
$$G_1(\Theta_C)\simeq Heis(\delta).$$
This is true by Remark \ref{re.-is} and the arguments in \cite[p.318]{Mu1}): consider the subgroups 
$h^{-1}(K_1(\delta)), h^{-1}(\widehat{K_1(\delta)})\subset J(C)_r$. Consider their lifts which are level 
subgroups 
$$\tilde{K_1(\delta)},\tilde{\widehat{K_1(\delta)}}\subset G_1(\Theta_C).$$
 Construct
$f:G_1(\Theta_C)\lrar Heis(\delta)$ by mapping $\tilde{K_1(\delta)}$ onto the subgroup $\{(1,x,0): x\in K_1(\delta)\}$ and $\tilde{\widehat{K_1(\delta)}}$ onto the subgroup 
$\{(1,0,l): l\in \widehat{K_1(\delta)}\}$.
Now extend multiplicatively to obtain an isomorphism $G_1(\Theta_C)\simeq Heis(\delta)$.

Hence, by Remark \ref{re.-is2} and Corollary \ref{co.-5}, $H^0(SU_C(r,\Delta),\Theta^k_\Delta)$ is now a $Heis(\delta)$--module of weight $km$. 
By Proposition \ref{pr.-iy1},
there is an isotypical decomposition as asserted.

\end{proof}

\begin{definition}
An isomorphism $G_1(\Theta_C)\simeq Heis(\delta)$ is called a generalized theta structure.
\end{definition}

\section{A decomposition of the Verlinde bundles of higher level}

\subsection{The Verlinde bundles of level $km$}

Fix a parabolic data $\Delta$ as in the previous section and satisfying the hypothesis in Lemma \ref{le.-t1}.

Consider a smooth projective family of curves with $n$--marked points
\begin{equation}\label{eq.-1}
\pi:\cC\lrar T
\end{equation}
of genus $g\,>\,0$ and suppose $T$ is nonsingular.

\begin{remark}\label{re.-c}
We may assume that $T$ is the moduli space of nonsingular projective connected $n$--marked curves of 
genus $g$, with suitable level structures, so that there is a universal curve over $T$. 

\end{remark}

We can associate to \eqref{eq.-1}, the following families:
\begin{equation}\label{eq.-2}
\pi_J: \cJ\lrar T
\end{equation}
is the family of Jacobian varieties of dimension $g$,
\begin{equation}\label{eq.-3}
\pi_r:\cSU(r)\lrar T
\end{equation}
is the family of moduli spaces of semi--stable vector bundles of rank $r$ and trivial determinant and
\begin{equation}\label{eq.-4}
\pi_S:\cSU(r,\Delta)\lrar T
\end{equation}
is the family of moduli spaces $\cSU_t(r,\Delta)$ of $\Delta$--parabolic semi--stable vector bundles on 
$\cC_t$ of rank $r$ and trivial determinant.
 
There is a line bundle $\Theta_\Delta$ (resp. $\Theta$) on $\cSU(r,\Delta)$ (resp. $\cSU(r)$) such that 
$\Theta_\Delta$ restricts on any 
fibre $\cSU_t(r,\Delta)$ (resp. $\cSU_t(r)$) 
to the \textit{parabolic theta bundle} $\Theta_{\Delta,t}$ (resp. $\Theta_t$) \cite{Dr-Na}, \cite{Na-Ra}.

\noindent
\textbf{Definition}: The vector bundles 
$$\cV_{r,km}\,=\,\pi_{S\,*}(\Theta^k_\Delta)$$
are called as the \textit{Verlinde bundles} of level $km$, for $k\,>\,0$.

\subsection{A decomposition of the Verlinde bundles}

We denote $$\gamma_{r,km}\,=\, \f{r^g}{(km,r)^g}.\sum_{\chi\in \widehat{K(\delta)_{km}}}n_\chi\,=\,\m{rank }\cV_{r,km}.$$

Consider the group scheme $\cJ_r\lrar T$ which is the kernel of the homomorphism
$$\cJ\lrar \cJ$$ given by multiplication by $r$ on $\cJ$. 
There is an exact sequence
$$1\lrar \G_{m,T} \lrar \cG_1(\Theta)\lrar \cJ_r\lrar 0$$
where $\cG_1(\Theta)$ represents the functor defining the automorphisms of $\Theta$ over the sections of $\cJ_r$ (see also \cite[p.76]{Mu2}, for similar constructions).
 
\begin{proposition}\label{pr.-1} 
Given a $t_0\in T$, there is an \'etale open cover $U\lrar T$ of $t_0$, such that 
$$ \cV_{r,km}\,\simeq \,\bigoplus_{\chi\in \widehat{K(\delta)_{km}}}  W_\chi \otimes F_\chi $$
over $U$ and for some vector bundles $F_\chi$ on $U$.
\end{proposition}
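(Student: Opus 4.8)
The plan is to globalize, over an étale neighborhood of $t_0$, the pointwise isotypical decomposition of Lemma \ref{le.-iy3}. First I would work with the relative theta group $\cG_1(\Theta) \to T$ introduced just before the statement, together with its restriction to the $km$-torsion part of the center; by Corollary \ref{co.-5} and Remark \ref{re.-is2} the sheaf $\cV_{r,km} = \pi_{S\,*}(\Theta^k_\Delta)$ carries a natural action of $\cG_1(\Theta)$ of weight $km$, fiberwise equal to the $Heis(\delta)$-action used in Lemma \ref{le.-iy3}. The point of passing to an étale cover $U \to T$ is precisely to trivialize the relative "generalized theta structure": on $U$ one can choose level subgroups lifting $h^{-1}(K_1(\delta))$ and $h^{-1}(\widehat{K_1(\delta)})$ as in the proof of Lemma \ref{le.-iy3}, simultaneously for all fibers, producing an isomorphism of group schemes $\cG_1(\Theta)|_U \simeq Heis(\delta) \times U$ (the constant Heisenberg group scheme). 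Here I would invoke Remark \ref{re.-c}, so that $T$ is the moduli of $n$-marked curves with level structure and the relevant torsion data already exist; the étale cover then only needs to split the $\comx^*$-extension and pick the two complementary level subgroups, which is an étale-local question on $T$.

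Next, given such a trivialization over $U$, the bundle $\cV_{r,km}|_U$ becomes a family of $Heis(\delta)$-modules of constant weight $km$. The decomposition theorem of Mumford \cite[Proposition 2, p.80]{Mu2} (which is exactly the relative analogue of our Proposition \ref{pr.-iy1}) applies: since the center $\comx^*$ acts by the fixed character $\al \mapsto \al^{km}$, one gets an eigenspace decomposition of $\cV_{r,km}|_U$ under the finite commutative subgroup $K(\delta)_{km}$ of $K(\delta)$, namely
$$\cV_{r,km}|_U \,\simeq\, \bigoplus_{\chi \in \widehat{K(\delta)_{km}}} \cV_{r,km}^{(\chi)},$$
where $\cV_{r,km}^{(\chi)}$ is the subbundle on which $K(\delta)_{km}$ acts via $\chi$. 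Each summand is preserved by $Heis(\delta)$ and, fiberwise, is a multiple of the single irreducible $W_\chi$ by Lemma \ref{le.-iy3}. Taking $Heis(\delta)$-invariant homomorphisms, one writes $\cV_{r,km}^{(\chi)} \simeq W_\chi \otimes F_\chi$ with $F_\chi := \mathrm{Hom}_{Heis(\delta)}(W_\chi, \cV_{r,km}^{(\chi)})$, the multiplicity bundle; this is coherent and, because the fiber ranks $n_\chi$ are constant in $t$ (they are determined by the Verlinde numbers, which are topological), it is locally free on $U$ after possibly shrinking $U$. Assembling over $\chi$ gives the asserted isomorphism.

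The main obstacle is the globalization of the theta structure, i.e. producing the étale cover on which $\cG_1(\Theta)|_U$ splits as the constant Heisenberg group scheme: one must check that the relevant level subgroups exist étale-locally and that the resulting isomorphism is compatible with the action on $\pi_{S\,*}(\Theta^k_\Delta)$. This is where one genuinely uses that the indeterminacy of $q_2$ has codimension $\geq 2$ (so $q^*\Theta_C$ makes sense relatively, as in Lemma \ref{le.-t1}) and that the $J(C)_r$-action deforms in families; the argument is the relative version of \cite[p.318]{Mu1} combined with \cite[p.76]{Mu2}. A secondary technical point is the local freeness of $F_\chi$: a priori one only gets coherence and fiberwise constant rank, so I would either appeal to semicontinuity plus the constancy of $n_\chi$, or note that $W_\chi$ is a flat (constant) summand so that $\mathrm{Hom}_{Heis(\delta)}(W_\chi, -)$ is exact on the eigen-subbundle. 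Neither of these is deep, so the real content is the étale trivialization in the first paragraph, after which Mumford's proposition does the rest.
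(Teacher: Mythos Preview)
Your proposal is correct and follows essentially the same approach as the paper: trivialize the relative theta group scheme $\cG_1(\Theta)$ over an \'etale neighborhood $U$ of $t_0$ to a constant $Heis(\delta)\times U$, then apply Mumford's \cite[Proposition~2, p.~80]{Mu2} together with Lemma~\ref{le.-iy3} to obtain the $K(\delta)_{km}$-eigenspace decomposition and factor each eigenbundle as $W_\chi\otimes F_\chi$. The paper's write-up is terser (it does not spell out $F_\chi$ as a Hom-bundle or discuss local freeness), and it observes that the coarser $\chi$-eigenspace splitting $\cV_{r,km}\simeq\bigoplus_\chi \cW_\chi$ already exists over $T$ before passing to $U$, but these are presentational differences rather than a different strategy.
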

\begin{proof}
Suppose $T$ is the moduli space of nonsingular $n$--marked curves with level $r$--structure.
Given a $t_0\in T$, a level $r$--structure can be lifted locally on $T$ to a generalized theta structure, say 
over an open \'etale cover $U\lrar T$, i.e., the group scheme $\cG_1(\Theta)$ trivializes over $U$ and is identified with $Heis(\delta)\times U$.
Hence $Heis(\delta)\times U$ acts on the Verlinde bundle $\cV_{r,km}$ with weight $k$.

Now the proof is, by using Lemma \ref{le.-iy3} and the arguments in  \cite[Proposition 2, p.80]{Mu2}:
Since the subgroup $K(\delta)_{km}$ is represented over $T$, there is a vector bundle decomposition
 $$\cV_{r,km}\simeq\,\bigoplus_{\chi\in \widehat{K(\delta)_{km}}} \cW_\chi$$
where $\cW_\chi$ is a subbundle and is acted by the character $\chi$.

Over $U$, we know that $\cW_\chi$ is acted upon by $Heis(\f{\delta}{km})$ and hence
$$\cW_\chi\simeq W_\chi \otimes F_\chi$$
where $W_\chi$ is defined in section 2. and for some vector bundle $F_\chi$ of rank $n_\chi$. 

This gives the required isomorphism
\begin{equation}\label{eq.-t}
\cV_{r,km}\sta{\simeq}{\lrar} \bigoplus _{\chi\in \widehat{K(\delta)_{km}}} W_\chi \otimes F_\chi
\end{equation}
over $U$.

\end{proof}

\begin{corollary}\label{co.-2} 
The Chern character of the Verlinde bundle $\cV_{r,1}$ is written as
$$ ch(\cV_{r,1})\,=\,\gamma_{r,1}.ch(L_S)\,\in\,CH^*(T)_\Q$$
for some line bundle $L_S$ on $T$.
\end{corollary}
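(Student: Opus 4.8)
The plan is to specialise Proposition~\ref{pr.-1} to level one and feed the outcome into the Verlinde formula. First I would note that \emph{level one} forces $k=m=1$, so by Remark~\ref{re.-p2} the parabolic data are irrelevant: $\cSU(r,\Delta)=\cSU(r)$, $\Theta_\Delta=\Theta$ and $\cV_{r,1}=\pi_{r\,*}\Theta$. Apply Proposition~\ref{pr.-1} with $km=1$. The torsion subgroup entering the statement is $K(\delta)_1=\{0\}$, so the index set $\widehat{K(\delta)_1}$ reduces to the single trivial character $\chi_0$; by Theorem~\ref{th.-mu1} (equivalently Proposition~\ref{pr.-iy1} with $l=1$) the associated $W_{\chi_0}$ is the unique irreducible $Heis(\delta)$--module of weight $1$, namely $V(\delta)$, of dimension $\delta_1\cdots\delta_g=r^g$, since here $\delta=(r,\ldots,r)$ with $r$ occurring $g$ times. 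Hence over a suitable finite \'etale surjective cover $\mu\colon U\lrar T$ (which we may take Galois) trivialising the generalised theta structure, Proposition~\ref{pr.-1} gives
$$\cV_{r,1}|_U\ \simeq\ V(\delta)\otimes F_{\chi_0}$$
for a vector bundle $F_{\chi_0}$ on $U$ of some rank $n_{\chi_0}$.

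Next I would read off $\m{rank}\,\cV_{r,1}=\gamma_{r,1}$ from the Verlinde formula displayed in the introduction. At level one the condition $|S|=r$ forces $|R|=1$; writing $R=\{z\}$, the integers $s-z$ with $s\in S$ run over all the nonzero residues modulo $r+1$, so every summand equals $\prod_{a=1}^{r}\big|2\sin\f{\pi a}{r+1}\big|^{g-1}=(r+1)^{g-1}$ by the classical identity $\prod_{a=1}^{n-1}2\sin\f{\pi a}{n}=n$; there are $r+1$ summands, so $\gamma_{r,1}=\big(\f{r}{r+1}\big)^g(r+1)^g=r^g$. Comparing ranks in the displayed isomorphism gives $r^g=\gamma_{r,1}=r^g\cdot n_{\chi_0}$, hence $n_{\chi_0}=1$: the bundle $F_{\chi_0}$ is a \emph{line} bundle on $U$.

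Finally I would compute the Chern character and descend to $T$. Over $U$, since $V(\delta)$ contributes only the trivial bundle of rank $r^g$,
$$\mu^*ch(\cV_{r,1})=ch\big(V(\delta)\big)\cdot ch(F_{\chi_0})=r^g\,e^{c_1(F_{\chi_0})},\qquad \mu^*\det\cV_{r,1}=F_{\chi_0}^{\otimes r^g}.$$
Because $\mu$ is finite \'etale and surjective, $\mu^*\colon CH^*(T)_\Q\lrar CH^*(U)_\Q$ is injective, so there is a unique $\xi\in CH^1(T)_\Q$ with $\mu^*\xi=c_1(F_{\chi_0})$, necessarily $\xi=\f{1}{r^g}\,c_1(\cV_{r,1})$; the second identity above shows in addition that $\det\cV_{r,1}$ is \'etale--locally an $r^g$--th power, and a descent argument then produces a genuine line bundle $L_S$ on $T$ with $c_1(L_S)=\xi$. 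Substituting back,
$$ch(\cV_{r,1})=r^g\,e^{c_1(L_S)}=\gamma_{r,1}\,ch(L_S)\qquad\m{in }CH^*(T)_\Q,$$
as asserted. The intermediate shape $ch(\cV_{r,1})=\gamma_{r,1}\exp\!\big(c_1(\cV_{r,1})/\gamma_{r,1}\big)$ can alternatively be obtained from the projective flatness of Hitchin's connection on $\cV_{r,1}$; the role of Proposition~\ref{pr.-1} and the Verlinde count is precisely to force the multiplicity to be $1$ and to exhibit the $r^g$--th root of $\det\cV_{r,1}$ \'etale--locally.

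The only step I expect to require real care is this last descent: promoting the \'etale--local $r^g$--th root of $\det\cV_{r,1}$ to an honest line bundle $L_S$ on $T$, equivalently showing that $c_1(\cV_{r,1})$ is divisible by $\gamma_{r,1}=r^g$ in $\m{Pic}(T)$ rather than merely in $CH^1(T)_\Q$. One can either appeal to a global form of Mumford's description of weight-one Heisenberg representations (the bundle of such representations of $\cG_1(\Theta)$ over $T$ being of the form $V(\delta)\otimes L$ for a line bundle $L$ on $T$, cf.\ \cite{Mu2}), or simply use the freedom, noted in the introduction, to replace $\cV_{r,1}$ by any $\m{Pic}(T)$--twist of itself; everything else is bookkeeping with the decomposition of Proposition~\ref{pr.-1} and the short Verlinde computation.
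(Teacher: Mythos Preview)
Your argument is correct and follows the same overall strategy as the paper: specialise Proposition~\ref{pr.-1} to $km=1$, observe that only the trivial character survives, show the multiplicity bundle $F_{\chi_0}$ has rank one, and read off the Chern character. The one substantive difference is how you force $n_{\chi_0}=1$. You compute $\gamma_{r,1}=r^g$ from the Verlinde formula and compare with $\dim V(\delta)=r^g$; the paper instead invokes the BNR result recorded in P.4 and Remark~\ref{re.-is}, which already says that $H^0(\cSU_C(r),\Theta_C)$ is an \emph{irreducible} weight-one $G_1(\Theta_C)$--module, so $n_{\chi_0}=1$ is immediate and the equality $\gamma_{r,1}=r^g$ comes for free. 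Your route is self-contained but leans on a much deeper input (the full Verlinde formula) to recover a fact that is elementary from the spectral-curve description. On the other hand, your treatment of the descent from $U$ to $T$ is more careful than the paper's, which simply asserts that $L_S=F_0$ is a line bundle on $T$ and works in $K^0(T)_\Q$; your observation that one may appeal either to a global version of Mumford's uniqueness for weight-one Heisenberg bundles or to the freedom to twist $\Theta$ by a line bundle on $T$ is exactly the content that the paper leaves implicit.
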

\begin{proof}
In the rational Grothendieck group $K^0(T)_\Q$, 
$$\cV_{r,1}\simeq L_S^{\oplus \gamma_{r,1}}$$
where $L_S=F_0$ is a line bundle.
This gives the assertion on the Chern characters in the rational Chow groups $CH^*(T)_\Q$.

\end{proof}

\begin{remark}
Since the moduli stack $\cM_g$ of curves has $\m{Pic}\cM_g =\Z.\lambda$ (\cite{Ar-Co}), where $\lambda$ is the first Chern class of the Hodge bundle $\pi_*\omega_{\cC/T}$, it follows that
$$L_S=l.\lambda \in CH^*(T)_Q, \m{ for some } l\in \Q,$$
(we may assume $T\lrar\cM_g$).
In particular, $$ch(\cV_{r,1}) = \gamma_{r,1}.ch(l.\lambda) \in CH^*(T)_\Q.$$

\end{remark}

\section{A remark on the multiplicities of the isotypical components}

In this section, we indicate how the multiplicity $n_\chi$ of the representation $W_\chi$ which occurs 
in $H^0(\cSU_C(r,\eta),\Theta^k_C)$  (here $n_\chi$ are as defined in Lemma \ref{le.-iy3}), can be computed. This was mentioned to us by A. Beauville.

Let $K\subset J(C)_r$ be any subgroup isomorphic to $\mu_s\times \mu_s$, $s\leq r$.
Consider the moduli space $M_{\f{SL(r)}{\mu_s}}$ of principal semistable $\f{SL(r)}{\mu_s}$ bundles. 

\subsection{The multiplicities when $K=J(C)_r$}\label{equal}

In this case we obtain the moduli space $M_{PGL(r)}$ of principal semi-stable $PGL(r)$-bundles.
Further, fix a point $p\in C$ and denote $L=\cO_C(d.p)$. Then we have \cite{Be1},
$$M_{PGL(r)}=\amalg_{0\leq d<r}M^d_{PGL(r)}$$
and $$M^d_{PGL(r)}=\f{\cSU_C(r,L)}{J(C)_r}.$$

Suppose $\Theta'$ denotes the primitive line bundle on $M^d_{PGL(r)}$ (i.e., the first power of the determinant line bundle which descends to the quotient) and 
$$\gamma^d_{r,k}= \m{dim} H^0(\cSU_C(r,L),\Theta^k_C).$$

By Remark \ref{re.-is2}, we can write
$$
\gamma^d_{r,k} = \sum_{\chi\in \widehat{J(C)_r}} n^d_{\chi}.\m{dim}W_\chi.
$$
Here $n^d_\chi$ is the multiplicity of $W_\chi$ which occurs in $H^0(\cSU_C(r,L),\Theta_C^k)$.

\begin{lemma}\label{le.-m}
Suppose $k$ is a multiple of $r$ and $r$ is odd
or if $k$ is a multiple of $2r$ and $r$ is even.
Then $$n_{\chi^{triv}} =  \m{ dim } H^0(M^d_{PGL(r)},\Theta')$$
and $$n_\chi = \f{\gamma^d_{r,k} - n_{\chi^{triv}}}{r^{2g}-1}, \,\chi\neq \chi^{triv}.$$  
\end{lemma}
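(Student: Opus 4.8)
The plan is to identify $n_{\chi^{\mathrm{triv}}}$ with the dimension of the space of sections of the descended bundle $\Theta'$ on $M^d_{PGL(r)}$, and then to argue that all the remaining multiplicities $n_\chi$, $\chi\neq\chi^{\mathrm{triv}}$, coincide; the displayed formula for $n_\chi$ then follows immediately from the Verlinde-type sum $\gamma^d_{r,k}=\sum_{\chi}n^d_\chi\cdot\dim W_\chi$ together with the hypothesis on $k$, which forces $\dim W_\chi=1$ for every $\chi$ (by Proposition \ref{pr.-iy1}, since $km$ is then divisible by each $\delta_i=r$, so $\delta/km=(1,\dots,1)$). Under that divisibility hypothesis the decomposition of Lemma \ref{le.-iy3} reads $H^0(\cSU_C(r,L),\Theta_C^k)\simeq\bigoplus_{\chi\in\widehat{J(C)_r}}n^d_\chi\cdot\chi$, i.e. it is simply the eigenspace decomposition of the $J(C)_r$-action (the Heisenberg group acts through its abelianization $K(\delta)$, which is $J(C)_r$ here, since the commutator pairing becomes trivial once the weight kills it). So $n^d_\chi$ is literally the dimension of the $\chi$-eigenspace of $J(C)_r$ acting on $H^0(\cSU_C(r,L),\Theta_C^k)$.

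First I would treat the trivial character. The $\chi^{\mathrm{triv}}$-eigenspace is the $J(C)_r$-invariant subspace $H^0(\cSU_C(r,L),\Theta_C^k)^{J(C)_r}$. By the quotient description $M^d_{PGL(r)}=\cSU_C(r,L)/J(C)_r$ and the fact that $\Theta_C^k$ descends to $\Theta'$ (this is exactly where the parity/divisibility hypothesis on $k$ and $r$ is used: by \cite{Be1} the power of the determinant bundle that descends to the $PGL(r)$-quotient is $r$ when $r$ is odd and $2r$ when $r$ is even, so $\Theta_C^k$ descends precisely when $k$ is such a multiple), we get $H^0(\cSU_C(r,L),\Theta_C^k)^{J(C)_r}=H^0(M^d_{PGL(r)},\Theta')$ — here one uses that taking invariants of global sections of an equivariant line bundle under a finite group equals global sections of the descended bundle on the quotient. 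This gives the first equation $n_{\chi^{\mathrm{triv}}}=\dim H^0(M^d_{PGL(r)},\Theta')$.

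Next I would show $n_\chi$ is independent of $\chi$ for $\chi\neq\chi^{\mathrm{triv}}$. The natural tool is the action of $G_1(\Theta_C)\simeq Heis(\delta)$: conjugation by an element of $J(C)_r$ lifted to $Heis(\delta)$ permutes the characters $\chi$ of $J(C)_r$ by translation via the (nondegenerate) Weil pairing, and correspondingly carries the $\chi$-eigenspace isomorphically onto the $(\chi\cdot\langle x,-\rangle)$-eigenspace. Since the Weil pairing on $J(C)_r$ is nondegenerate (Remark \ref{re.-is}), the orbit of any nontrivial character under these translations is all of $\widehat{J(C)_r}\setminus\{\chi^{\mathrm{triv}}\}$ — indeed the pairing identifies $J(C)_r$ with $\widehat{J(C)_r}$ and translations by a subgroup that surjects onto the whole group act transitively on the complement of $\{\chi^{\mathrm{triv}}\}$ only if that subgroup is everything; more carefully, one notes the subgroup $\widehat{K_1(\delta)}\subset Heis(\delta)$ already acts on characters by all translations coming from $K_1(\delta)$, and combined with the $K_1(\delta)$-part one gets the full translation action, hence transitivity on nontrivial characters. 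Therefore all $n_\chi$ with $\chi\neq\chi^{\mathrm{triv}}$ are equal; call this common value $n$. Counting dimensions, $\gamma^d_{r,k}=n_{\chi^{\mathrm{triv}}}+(r^{2g}-1)\,n$ (using $|\widehat{J(C)_r}|=r^{2g}$ and $\dim W_\chi=1$), which solves for $n$ and yields the second equation.

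The main obstacle I expect is the transitivity claim: verifying carefully that the conjugation action of the Heisenberg group on the eigencharacters of its center-modulo-commutator really does move every nontrivial character to every other one, and that the accompanying linear isomorphisms of eigenspaces are genuine (this requires the weight to be such that the commutator subgroup acts trivially, i.e. exactly the divisibility hypothesis, so that the "eigenspace" language is legitimate and $Heis(\delta)$ acts through $J(C)_r$). A secondary technical point is the descent statement for $\Theta_C^k$ to $M^d_{PGL(r)}$ — one must invoke the precise result of Beauville \cite{Be1} on which power of the determinant bundle descends to the $PGL(r)$-moduli space, and check it matches the stated numerical hypothesis on $k$ and the parity of $r$; this is where I would cite rather than reprove. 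Everything else is a dimension count.
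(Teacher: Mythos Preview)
Your overall strategy matches the paper's: cite the descent of $\Theta_C^k$ to $M^d_{PGL(r)}$ (the paper invokes \cite{Be-La-So} rather than \cite{Be1}), identify the $J(C)_r$-invariants with sections of the descended bundle to get $n_{\chi^{\mathrm{triv}}}$, use Proposition~\ref{pr.-iy1} to see $\dim W_\chi=1$ under the divisibility hypothesis, and then read off the formula for the remaining $n_\chi$ from the dimension count once one knows they are all equal. On this last point the paper simply \emph{asserts} that $n^d_\chi$ is constant for $\chi\neq\chi^{\mathrm{triv}}$, without argument.

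Your attempt to supply that argument via the Heisenberg action, however, breaks down. Precisely because $r\mid k$, every commutator $e(x,y)\in\mu_r\subset\comx^*$ acts on a weight-$k$ module by $e(x,y)^k=1$; hence the $Heis(\delta)$-action on $H^0(\cSU_C(r,L),\Theta_C^k)$ factors through the \emph{abelian} quotient $Heis(\delta)/\mu_r$, and every group element preserves each eigenspace rather than permuting them. In other words, the very hypothesis that makes the ``$\chi$-eigenspace'' language legitimate (which you note) is what kills the permutation mechanism you want to use. You can also see the inconsistency internally: translation of characters by $\langle x,-\rangle$ as $x$ ranges over $J(C)_r$ would, if it were genuinely operating, act transitively on \emph{all} of $\widehat{J(C)_r}$ --- including $\chi^{\mathrm{triv}}$ --- forcing every $n_\chi$ to coincide, which is not what is claimed. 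So the constancy of the nontrivial multiplicities needs a symmetry lying outside $G_1(\Theta_C)$ itself (for instance a monodromy or symplectic-group argument permuting the nontrivial characters); neither your proposal nor the paper's proof provides one.
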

\begin{proof}
In \cite{Be-La-So}, it is shown that 
$\Theta^k_C$ descends down to the quotient $M^d_{PGL(r)}$ if $k=l.r$ and $r$ is odd
or if $k=l.2r$ and $r$ is even.
We note that the $J(C)_r$-invariant sections of $H^0(\cSU_C(r,L),\Theta_C^k)$ is the isotypical component $n^d_{\chi^{triv}}.W_{\chi^{triv}}$ which is precisely the pullback of the space of sections 
$H^0(M^d_{PGL(r)},\Theta'^l)$. Further, by Proposition \ref{pr.-iy1}, it follows that $\m{dim}W_\chi=1$, for any $\chi \in \widehat{J(C)_r}$.
The assertion now follows from the equality
$$\gamma^d_{r,k}= \sum_{\chi\in \widehat{J(C)_r}}n^d_\chi.\m{dim}W_\chi$$
and noting that $n^d_\chi$ is constant, for any $\chi\neq \chi^{triv}$.
\end{proof}

\begin{remark}
In \cite[Proposition 3.4]{Be1}, when $r$ is a prime, $\m{dim} H^0(M^d_{PGL(r)},\Theta'^l)$ is computed. 
Hence we get an explicit formula for the multiplicities $n_{\chi^{triv}}$ and
$n_\chi$ in this case. 

\end{remark}

\subsection{The multiplicties when $K\subset J(C)_r$}

For a subgroup $K=\mu_s\times \mu_s\subset J(C)_r$, $s<r$, we consider the 
intermediate quotients
$$
\cSU_C(r,L) \lrar \f{\cSU_C(r,L)}{K} \lrar M^d_{PGL(r)}.
$$

As in \S\ref{equal}, the disjoint union
$$
M_{\f{SL(r)}{\mu_s}}:=\amalg_{0\leq d<r} \f{\cSU_C(r,L)}{K}
$$ 
is the moduli space  of principal semi-stable 
$\f{SL(r)}{\mu_s}$- bundles.

\begin{lemma}\label{le.-descend}
Given any integer $k$, there is a subgroup $K=\mu_s\times \mu_s\subset J(C)_r$, $s\leq r$, such that $\Theta^{k}_C$ 
descends down to the variety
$\f{\cSU_C(r,L)}{K}$ as a power of a primitive line bundle 
$\Theta'_K$.
\end{lemma}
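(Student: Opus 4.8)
The plan is to reduce the descent question to a statement about the theta group $G_1(\Theta_C)$ and its action on $H^0(\cSU_C(r,L),\Theta^k_C)$, exactly in the spirit of Remark \ref{re.-is2} and Lemma \ref{le.-iy3}. First I would recall the general descent criterion (Kempf's lemma, as used in \cite{Be-La-So}): a line bundle on $\cSU_C(r,L)$ descends to the quotient by a finite subgroup $K \subset J(C)_r$ precisely when, for every $x \in K$, the chosen linearization makes the isotropy group at the (finitely many) fixed points act trivially on the fibre. Equivalently, in our language, $\Theta^k_C$ descends to $\cSU_C(r,L)/K$ if and only if the central extension $1 \to \comx^* \to G_k(\Theta_C) \to J(C)_r \to 0$ splits over $K$, i.e. $K$ admits a lift to a subgroup of $G_k(\Theta_C)$ on which $\comx^*$ is complementary. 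Since $G_k(\Theta_C)$ is the pushout of $G_1(\Theta_C)$ along $\al \mapsto \al^k$, the commutator (Weil) pairing on $G_k(\Theta_C)$ is the $k$-th power of the nondegenerate pairing $e\colon J(C)_r \times J(C)_r \to \mu_r$ associated to $G_1(\Theta_C)$ (nondegenerate by Remark \ref{re.-is3}, since $\Theta_C$ is primitive). Thus $K$ lifts to $G_k(\Theta_C)$ iff $K$ is isotropic for $e^{\otimes k} = e^k$.

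The core step is therefore to exhibit, for any given $k$, a subgroup $K = \mu_s \times \mu_s \subset J(C)_r$ that is isotropic for $e^k$. Write $s = r/(k,r)$ when that works, or more carefully: the pairing $e^k$ takes values in $\mu_{r/(k,r)}$, so any subgroup killed by $e^k$ must lie in the $(r/(k,r))$-torsion inside $J(C)_r$ in each symplectic coordinate. Using a symplectic basis for $(J(C)_r, e) \cong \mu_r^{g} \times \widehat{\mu_r^g}$, I would take the "diagonal" rank-two subgroup spanned by one Lagrangian generator $a$ and its dual $b$ restricted to the $s$-torsion, with $s = r/(k,r)$: then $e^k(a^i b^j, a^{i'} b^{j'}) = e(a,b)^{k(ij' - i'j)}$, and since $e(a,b)$ is a primitive $r$-th root of unity and $s \mid r$ while $k \cdot (r/s) = k(k,r) \equiv 0 \pmod r$... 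I would need to choose $s$ so that $ks \equiv 0 \pmod r$, i.e. $s$ a multiple of $r/(k,r)$, and simultaneously $s \le r$ with $\mu_s \times \mu_s$ genuinely sitting inside $J(C)_r$ (automatic once $s \mid r$). Taking $s$ to be the smallest multiple of $r/(k,r)$ dividing $r$ — which is $r/(k,r)$ itself — gives $K = \mu_s \times \mu_s$ isotropic for $e^k$, hence liftable, hence $\Theta^k_C$ descends.

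Having the descent, the remaining assertion is that the descended bundle $\Theta'_K$ on $\cSU_C(r,L)/K$ is a power of a primitive line bundle. For this I would argue as in P.3 and in Remark \ref{re.-is3}: $\mathrm{Pic}(\cSU_C(r,L)/K)$ injects (via pullback, which is injective because the quotient map is finite and the varieties are normal) into $\mathrm{Pic}(\cSU_C(r,L)) = \Z \cdot \Theta_C$; so $\Theta'_K$ is a positive integer multiple of whatever the primitive generator of the image is. Naming that generator $\Theta'_K$ (or its appropriate power) gives the statement as phrased — "descends as a power of a primitive line bundle $\Theta'_K$." I expect the main obstacle to be the bookkeeping in the second paragraph: pinning down the precise $s$ and the precise rank-two isotropic subgroup while making the three constraints ($ks \equiv 0 \bmod r$; $s \le r$; $\mu_s \times \mu_s \hookrightarrow J(C)_r$ as a symplectically split pair) compatible, and making sure the construction is uniform in $g$. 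The descent criterion and the $\mathrm{Pic}$ computation are routine given the machinery already assembled in Sections 2 and 3; the combinatorial choice of $K$ is where care is needed.
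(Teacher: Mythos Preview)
Your overall strategy---reduce descent to the existence of a lift of $K$ in the theta group, which in turn is equivalent to $K$ being isotropic for the commutator pairing $e^k$---is exactly what the paper does. The paper's proof, however, sidesteps your combinatorial difficulty entirely: rather than building a rank-two isotropic subgroup by hand from a symplectic basis, it simply takes $K$ to be the \emph{degeneracy} (radical) of the Weil form associated to the extension
\[
1 \longrightarrow \comx^* \longrightarrow G_1(\Theta^k_C) \longrightarrow J(C)_r \longrightarrow 0.
\]
The radical is automatically isotropic, hence (since $\comx^*$ is divisible) lifts, and this lift is the descent datum. No explicit choice of $s$ or symplectic generators is needed. (Strictly speaking the radical is $J(C)_{(k,r)}\cong \mu_{(k,r)}^{2g}$, so the paper's assertion that it equals $\mu_s\times\mu_s$ is only literally correct when $g=1$; for the lemma as stated one just takes a $\mu_s\times\mu_s$ inside the radical with $s=(k,r)$.)

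Your explicit construction, by contrast, contains a genuine slip. For the subgroup generated by $a^{r/s}$ and $b^{r/s}$ (with $e(a,b)=\zeta$ a primitive $r$-th root), isotropy under $e^k$ reads $\zeta^{\,k(r/s)^2}=1$, i.e.\ $r \mid k(r/s)^2$; this is \emph{not} the condition $ks\equiv 0 \pmod r$ you wrote down. Your choice $s=r/(k,r)$ fails already for $r=6$, $k=2$: then $r/s=2$ and $\zeta^{k(r/s)^2}=\zeta^{8}=\zeta^{2}\neq 1$. The value that always works is $s=(k,r)$ (equivalently, any $\mu_s\times\mu_s$ contained in the radical), which is precisely what the paper's choice of the degeneracy locus hands you for free. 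Once you replace your $s$ by $(k,r)$, your argument goes through and coincides with the paper's; your treatment of the primitivity claim via $\m{Pic}(\cSU_C(r,L))=\Z\cdot\Theta_C$ is fine.
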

\begin{proof}
Notice that the degeneracy of the Weil form on $J(C)_r$ associated to the exact sequence
$$1\lrar \comx^*\lrar G_1(\Theta^k_C)\lrar J(C)_r\lrar 0$$
is a subgroup $K=\mu_s\times \mu_s\subset J(C)_r$ for some $s\leq r$.
Hence there is a lift of $K$ in $G_1(\Theta^k_C)$ over $K$ which forms a descent data for the line bundle $\Theta^k_C$.
\end{proof}

As in \S \ref{equal}, we denote for $L=\cO(d.p)$ and $0\leq d < r$,
$$
\gamma^d_{r,k}= \m{dim} H^0(\cSU_C(r,L),\Theta^k_C)
$$
and $n^d_\chi$ is the multiplicity of $W_\chi$ which occurs in $H^0(\cSU_C(r,L),\Theta_C^k)$.

Then
$$
\gamma^d_{r,k}= \sum_{\chi\in \widehat{K}}n^d_\chi.\m{dim} W_\chi.
$$
Hence we write
\begin{eqnarray*}
\gamma_{r,k} & := & \sum_{0\leq d < r}\gamma^d_{r,k}\\ 
            & = &  \sum_{\chi\in \widehat{K}}n_\chi.W_\chi
\end{eqnarray*}
where $n_\chi = \sum_{0\leq d < r}n^d_\chi$.

\begin{lemma}
The multiplicities $n_\chi$, for any $\chi\in \widehat{K }$, can be computed.
\end{lemma}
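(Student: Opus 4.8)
The plan is to reduce the computation of the multiplicities $n_\chi$ for a general subgroup $K = \mu_s \times \mu_s \subset J(C)_r$ to the already-treated case $K = J(C)_r$ by a two-step descent, exactly paralleling the proof of Lemma \ref{le.-m}. First I would fix, as in Lemma \ref{le.-descend}, a subgroup $K = \mu_s \times \mu_s$ so that $\Theta^k_C$ descends to $\frac{\cSU_C(r,L)}{K}$ as $\Theta'^{\,t}_K$ for a primitive line bundle $\Theta'_K$ and some positive integer $t$. The key representation-theoretic input is that the $K$-invariant part of $H^0(\cSU_C(r,L),\Theta^k_C)$ is precisely the isotypical component $n^d_{\chi^{\mathrm{triv}}} \cdot W_{\chi^{\mathrm{triv}}}$ attached to the trivial character of $K$, and that this invariant subspace is identified, via pullback along the quotient map $\cSU_C(r,L) \lrar \frac{\cSU_C(r,L)}{K}$, with $H^0\!\left(\frac{\cSU_C(r,L)}{K},\Theta'^{\,t}_K\right)$. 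Hence
$$
n^d_{\chi^{\mathrm{triv}}} \cdot \dim W_{\chi^{\mathrm{triv}}} \;=\; \dim H^0\!\left(\tfrac{\cSU_C(r,L)}{K},\Theta'^{\,t}_K\right),
$$
and since by Remark \ref{re.-is2} and Proposition \ref{pr.-iy1} all the $W_\chi$ here have the same dimension $\frac{r^g}{(s,\cdot)\cdots}$ — in the situation where $\Theta^k_C$ actually descends one gets $\dim W_\chi = |K|^{1/2}$-type values that are independent of $\chi$ — this recovers $n^d_{\chi^{\mathrm{triv}}}$ explicitly once the dimension of sections on the quotient moduli space is known.

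Second, I would handle the non-trivial characters. The group $K$ acts on itself by translation and on $\widehat{K}$ by the dual action, and the crucial point, coming from the nondegeneracy of the relevant piece of the Weil form and the Heisenberg structure of $G_1(\Theta^k_C)$, is that the outer automorphism group acting here permutes transitively the characters $\chi \neq \chi^{\mathrm{triv}}$ whose restriction to a given isotropic factor is fixed; more simply, as in Lemma \ref{le.-m}, the decomposition is governed by the $\frac{SL(r)}{\mu_s}$-moduli picture and the multiplicities $n^d_\chi$ are constant over all $\chi$ in each orbit. In the cleanest case they are constant over \emph{all} $\chi \neq \chi^{\mathrm{triv}}$, so that from the Verlinde-type total
$$
\gamma^d_{r,k} \;=\; \sum_{\chi \in \widehat{K}} n^d_\chi \cdot \dim W_\chi
$$
one solves
$$
n^d_\chi \;=\; \frac{\gamma^d_{r,k} - n^d_{\chi^{\mathrm{triv}}} \dim W_{\chi^{\mathrm{triv}}}}{(|K| - 1)\dim W_\chi}, \qquad \chi \neq \chi^{\mathrm{triv}},
$$
with $|K| = s^2$. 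Summing over $0 \le d < r$ then yields $n_\chi = \sum_d n^d_\chi$ as asserted, all quantities on the right being known: $\gamma^d_{r,k}$ via the Verlinde formula, $\dim W_\chi$ via Proposition \ref{pr.-iy1}, and $n^d_{\chi^{\mathrm{triv}}}$ via the dimension of generalized theta functions on the $\frac{SL(r)}{\mu_s}$-moduli space (as in \cite{Be1}, \cite{Be-La-So}).

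The main obstacle I anticipate is the step asserting that $n^d_\chi$ is independent of $\chi$ within the relevant orbit — equivalently, that the normalizer of $K$ in $G_1(\Theta^k_C)$, or the mapping-class-type symmetries, act transitively enough on $\widehat{K}\setminus\{\chi^{\mathrm{triv}}\}$. When $s < r$ the characters of $K$ need not form a single orbit under the natural symmetries, so one may only get that $n^d_\chi$ is constant on each orbit, and the clean single formula above must be replaced by one formula per orbit; in that case the honest statement is that the $n_\chi$ are determined by finitely many linear relations together with the descent dimensions for each intermediate quotient $\frac{\cSU_C(r,L)}{K'}$ as $K'$ ranges over subgroups of $K$. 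I would phrase the proof so as to extract $n_\chi$ from this finite linear system, which is solvable because the $\dim W_\chi$ and the various $\gamma^d$ and descent-section dimensions are all computable — that is the content of the claim that "the multiplicities can be computed."
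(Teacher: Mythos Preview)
Your strategy is the paper's strategy: identify the $K$-invariants with sections on the quotient $\cSU_C(r,L)/K$, use that all $W_\chi$ have the same dimension (Proposition~\ref{pr.-iy1}), and then solve for the remaining $n_\chi$ by the constancy over $\chi\neq\chi^{\mathrm{triv}}$, exactly as in Lemma~\ref{le.-m}.

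There are two substantive divergences worth noting. First, the decisive external input in the paper is different from what you cite: the paper appeals to conformal field theory \cite{S-Y} to obtain the \emph{summed} dimension $\sum_{0\le d<r}\dim H^0(M^d_{SL(r)/\mu_s},{\Theta'}_K^{\,l})$, whereas \cite{Be1} and \cite{Be-La-So} do not supply this number for general $SL(r)/\mu_s$; \cite{Be1} treats only $PGL_p$ with $p$ prime. Second, and relatedly, the paper works directly with the summed quantities $n_\chi=\sum_d n^d_\chi$ and never claims to compute the individual $n^d_\chi$: the remark immediately following the proof states explicitly that the individual $\dim H^0(M^d_{SL(r)/\mu_s},{\Theta'}_K^{\,l})$ are not known, so your route via the individual $n^d_\chi$ presupposes information that is not currently available. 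Once you sum your formula over $d$ you recover exactly the paper's argument, so the endpoint is the same; but you should reorganize the proof to work with the $d$-sum from the outset and replace the references by the CFT input. Your caution in the final paragraph about transitivity on $\widehat{K}\setminus\{\chi^{\mathrm{triv}}\}$ is well placed, though the paper simply asserts the analogue of Lemma~\ref{le.-m} goes through without further comment.
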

\begin{proof}

By Lemma \ref{le.-descend}, there is an $s$, $0\leq s\leq r$ and $K=\mu_s\times \mu_s\subset J(C)_r$, such that
$\Theta^k_C$ descends to $\f{\cSU_C(r,L)}{K}$ as a power of a primitive line 
bundle $\Theta'_K$.
 By Remark \ref{re.-is3}, $H^0(\cSU_C(r,L),\Theta_C^k)$ is a $G_1(\Theta_C)$-module of weight $k$, for $L=\cO(d.p)$ and
$0\leq d< r$.
By conformal field theory (\cite{S-Y}), we know the vector space dimension
$$\sum_{0\leq d< r}\m{dim}H^0(M^d_{\f{SL(r)}{\mu_s}},{\Theta'}^l_K).$$ 

As shown in Lemma \ref{le.-m}, a similar argument gives the multiplicities
$n_\chi=\sum_{0\leq d<r} n^d_\chi$, for any $\chi\in \widehat{K}$. 

\end{proof}

\begin{remark}
If the dimensions of the individual  vector spaces $H^0(M^d_{\f{SL(r)}{\mu_s}},{\Theta'}^l_K)$ are known then we would be able to compute the individual multiplicties $n^d_\chi$.
\end{remark}

\subsection{A remark on the multiplicities $n_\chi$}

Let $\gamma_{r,k}=\m{dim} H^0(\cSU_C(r),\Theta_C^k)$.
Then by the Verlinde formula (\cite{Be-La},\cite{Fa2}), we have

$$\gamma_{r,k}\,=\,(\f{r}{r+k})^g.\sum_{\sta{S\sqcup R=[1,r+k]}{|S|=r}}\prod_{\sta{s\in S}{z\in R}}|2.\m{sin}\,\pi\f{s-z}{r+k}|^{g-1}.$$

Also, by Remark \ref{re.-is2}, we can write
\begin{eqnarray*}
\gamma_{r,k} & = & \sum_{\chi \in\widehat{(J(C)_r)_k}} n_\chi.\m{dim}W_\chi \\
             &= & \f{r^g}{(r,k)^g}.\sum_{\chi\in \widehat{(J(C)_r)_k}} n_\chi, \m{ by Proposition \ref{pr.-iy1}}. \\
\end{eqnarray*}

Comparing the above two expressions, we get

$$\sum_{\chi\in \widehat{(J(C)_r)_k}}n_\chi \, = \, \f{(r,k)^g}{(r+k)^g}. \sum_{\sta{S\sqcup R=[1,r+k]}{|S|=r}}\prod_{\sta{s\in S}{z\in R}}|2.\m{sin}\,\pi\f{s-z}{r+k}|^{g-1}.$$
(See also \cite{Za}, for the various aspects of the Verlinde formula.)


\begin{thebibliography}{AAAAAA}

\bibitem[An-Ma]{An-Ma} Andersen, J. E., Masbaum, G. {\em Involutions on moduli spaces and refinements of the Verlinde formula}, Math. Ann.  314  (1999),  no. \textbf{2}, 291--326. 

\bibitem[Ar-Co]{Ar-Co} Arbarello, E., Cornalba, M. {\em The Picard groups of the moduli spaces of curves}, Topology  26  (1987),  no. 2, 153--171.

\bibitem[Be1]{Be1}  Beauville, A. {\em The Verlinde formula for ${\rm PGL}\sb p$}, The mathematical beauty of physics (Saclay, 1996),  141--151, Adv. Ser. Math. Phys., 24, World Sci. Publishing, River Edge, NJ, 1997.

\bibitem[Be-La]{Be-La} Beauville, A., Laszlo, Y. {\em Conformal blocks and generalized theta functions}, Comm. Math. Phys. 164 (1994), no. \textbf{2}, 385--419. 

\bibitem[Be-La-So]{Be-La-So}  Beauville, A., Laszlo, Y., Sorger, C. {\em The Picard group of the moduli of $G$-bundles on a curve}, Compositio Math.  112  (1998),  no. 2, 183--216. 

\bibitem[BNR]{BNR} Beauville, A., Narasimhan, M. S., Ramanan, S. {\em Spectral curves and the generalised theta divisor} J. Reine Angew. Math. \textbf{398} (1989),
  169--179.

\bibitem[Dr-Na]{Dr-Na} Drezet, J.-M., Narasimhan, M. S. {\em Groupe de Picard des vari\'et\'es de modules de fibr\'es semi-stables sur les courbes alg\'ebriques}, Invent. Math. 97 (1989), no. \textbf{1}, 53--94.

\bibitem[Es]{Es} Esnault, H. {\em Private communication}, in June 2004.


\bibitem[Fa1]{Fa1} Faltings, G. {\em Stable $G$-bundles and projective connections}, J. Algebraic Geom. 2 (1993), no. \textbf{3}, 507--568.

\bibitem[Fa2]{Fa2} Faltings, G. {\em A proof for the Verlinde formula}, J. Algebraic Geom. 3 (1994), no. \textbf{2}, 347--374.

\bibitem[Hi]{Hi} Hitchin, N. {\em Flat connections and geometric quantization}, Comm. Math. Phys. 131 (1990), no. \textbf{2}, 347--380.

\bibitem[Iy1]{Iy1} Iyer, J. {\em Projective normality of abelian surfaces given by primitive line bundles}, Manuscripta Math. 98 (1999), no. \textbf{2}, 139--153.

\bibitem[Iy2]{Iy2} Iyer, J. {\em Line bundles of type $(1,\dots,1, 2,\dots,2, 4,\dots,4)$ on abelian varieties}, Internat. J. Math. 12 (2001), no. \textbf{1}, 125--142.

\bibitem[Me-Se]{Me-Se} Mehta, V., Seshadri, C.S. {\em Moduli of Vector bundles on Curves with parabolic structures}, Math. Ann.  248  (1980), no. \textbf{3}, 205--239.

\bibitem[Mu1]{Mu} Mumford, D. {\em Projective invariants of projective
 structures and applications} 1963  Proc. Internat. Congr. Mathematicians (Stockholm, 1962)  pp. 526--530 Inst. Mittag-Leffler, Djursholm. 

\bibitem[Mu2]{Mu1} Mumford, D. {\em Equations defining abelian varieties I}, Invent. Math. \textbf{1}, 1966 287--354. 

\bibitem[Mu3]{Mu2} Mumford, D. {\em Equations defining abelian varieties II}, Invent. Math. \textbf{3}, 1967,
 75--135. 

\bibitem[Na-Ra]{Na-Ra} Narasimhan, M.S., Ramadas, T.R. {\em Factorisation of generalised theta functions. I}, Invent. Math.  114  (1993),  no. \textbf{3}, 565--623.

\bibitem[Na-Rm]{Na-Ram1} Narasimhan, M. S., Ramanan, S. {\em $2\theta$-linear systems on abelian varieties},  Vector bundles on algebraic varieties (Bombay, 1984),  415--427, Tata Inst. Fund. Res. Stud. Math., \textbf{11}, Tata Inst. Fund. Res. Bombay, 1987.

\bibitem[Na-Se]{Na-Se} Narasimhan, M. S., Seshadri, C. S. {\em Stable and unitary vector bundles on a compact Riemann surface}, Ann. of Math. (2)  82  1965 540--567. 

\bibitem[Re]{Re} Reznikov, A. {\em All regulators of flat bundles are torsion}, Ann. of Math. (2) 141 (1995), no. 2, 373–386. 

\bibitem[S-Y]{S-Y}  Schellekens, A. N.; Yankielowicz, S. {\em Field identification fixed points in the coset construction}  Nuclear Phys. B  334  (1990),  no. 1, 67--102. 

\bibitem[Se]{Se} Seshadri, C.S. {\em Space of unitary vector bundles on a compact Riemann surface},  Ann. of Math. (2)  \textbf{85}  1967 303--336. 

\bibitem[Su]{Su} Sun, X. {\em Factorisation of generalized theta functions in the reducible case}, Ark. Mat.  41  (2003),  no. \textbf{1}, 165--202.

\bibitem[We]{We} Welters, G.E. {\em Polarized abelian varieties and the heat equations}, Compositio Math. 49 (1983), no. \textbf{2}, 173--194.

\bibitem[Za]{Za} Zagier, Don {\em Elementary aspects of the Verlinde formula and of the Harder-Narasimhan-Atiyah-Bott formula}, Proceedings of the Hirzebruch 65 Conference on Algebraic Geometry (Ramat Gan, 1993),  445--462, Israel Math. Conf. Proc., 9, Bar-Ilan Univ., Ramat Gan, 1996.
\end {thebibliography}

\end{document}